\documentclass[12pt,titlepage]{article}
\usepackage{amsmath}
\usepackage{amssymb}
\usepackage{epic,eepic}
\usepackage{verbatim}
\usepackage{graphicx}

\def\R{\mathbb{R}}

\def\al{\alpha}

\def\lm{\lambda}
\def\norm#1{\lVert#1\rVert}
\def\abs#1{\lvert#1\rvert}
 
\def\eps{\varepsilon}

\def\gm{\gamma}
\def\TGS{T_{GS}}

\def\wt{\widetilde}

\usepackage{amsthm}
\newtheorem{lem}{Lemma}[section]
\newtheorem{thm}{Theorem}[section]

\newtheorem{exl}{Example}

\theoremstyle{remark}
\newtheorem{rem}{Remark}[section]

\renewcommand{\baselinestretch}{2.0}

\pagestyle{plain} \oddsidemargin -0cm \evensidemargin 0cm \topmargin -0.3in \textwidth
6.5in \textheight 8.50 in

\begin{document}
\title{Acceleration Operators in the Value Iteration Algorithms for Average Reward Markov Decision Processes \\} \vskip5mm
\author{Oleksandr Shlakhter, Chi-Guhn Lee \\ \\
Department of Mechanical and Industrial Engineering \\
University of Toronto, Toronto, Ontario, Canada,  M5S 3G8}
\maketitle
\begin{abstract}

One of the most widely used methods for solving average cost MDP problems
is the  value iteration method. This method, however, is often computationally
impractical and restricted in size of solvable MDP problems. We propose
acceleration operators that improve the performance of the value iteration
for average reward MDP models. These operators are based on two important
properties of Markovian operator: contraction mapping and monotonicity. It
is well known that the classical relative value iteration methods for
average cost criteria MDP do not involve the max-norm contraction or
monotonicity property. To overcome this difficulty we propose to combine
acceleration operators with variants of value iteration for stochastic
shortest path problems associated average reward problems.


\noindent {\bf Keywords}: Markov decision processes, stochastic
shortest path problem, value iteration, accelerated convergence,
linear programming.
\end{abstract}

\section{Introduction} \label{sec:intro}
 One of the most widely used methods for solving average MDP problems is the value iteration method. In general, this method is often appears to be computationally impractical and restricted in size of solvable MDP problems. Shlakhter, et al. \cite{shlakhter:2005} proposed acceleration operators to speed up the convergence of value iteration algorithms for discounted MDP models. These operators were based on the contraction property of Markovian operators for discounted MDPs. In this paper we will show how similar techniques can be used to accelerate the convergence of the value iteration for average reward MDP models.

It is well known that the
classical relative value iteration methods for average cost criteria
MDP, unlike the discounted and the expected total-cost (stochastic shortest path)
models do not involve the max-norm contraction or monotonicity
property. Bertsekas  ~\cite{bertsekas:2001,
bertsekas:1998} proposed an elegant way of constructing variants of
the relative value iteration algorithm using the connection between the
average cost models and corresponding stochastic shortest path
problems, which possesses the above properties under assumption that all
policies are unichain and there exists a recurrent state under all
policies. We will show how this approach, combined with acceleration operators technique, can be used to get more efficient variants of value iteration for average reward MDP models.

The rest of the paper is organized as follows. In Sections \ref{sec:accel-VI-disc} and  \ref{sec:bert-VI} we  briefly present the accelerated operators method for discounted MDPs and Bertecas' variants of value iteration algorithm for the average reward MDP. In Section  \ref{sec:accel-VI-av} we introduce variants of accelerated value iteration algorithms for average reward MDPs. Section 3 presents the numerical studies of the proposed accelerated value iteration algorithms. In Section 4 we discuss the computational complexity of accelerated operators. Finally, Section 5 concludes the paper.

\section{Accelerated Value Iterations Algorithms}
\label{sec:accel-value}

\subsection{Accelerating Operators (Infinite Horizon Discrete Time Discounted MDP Case)}
\label{sec:accel-VI-disc}
Consider an infinite horizon Markov decision process (MDP) with a finite set of
states denoted by $S$, a finite set of actions $A(i)$ for each state $i \in S$, an
immediate reward $r(i,a)$ for each $i \in S$ and $a \in A=\cup_{i \in S}A(i)$, and
transition probabilities $p_{ij}(a)$ for the $i,j \in S$ and $a\in A(i)$. The
objective is to determine $v_i$, the minimum expected total discounted reward over an
infinite horizon starting in state $i$, where $\alpha$ is the discount factor ($0\le
\alpha < 1$). It is well known~\cite{Puterman:MDP94} that $v$ satisfies the
optimality equation
\begin{equation} \label{MDP}
v(i)=\min_{a\in A(i)}\left\{r(i,a)+\alpha \sum_{j\in S} p_{ij}(a) v(j)\right\}.
\end{equation}
The optimality equation given in Equation
(\ref{MDP}) can be written, with the definition of the operator $T$ on $U$, in the
following vector notation.
\begin{equation} \label{MDP vector}
v=Tv\equiv \min_{d\in \Pi}\left\{r_d + \alpha P_d v \right\},
\end{equation}
where $\Pi$ is the set of policies.
There are several standard methods for finding optimal or approximately optimal
policies for the discounted MDP models. Approaches widely employed to solve MDP problems include value
iteration, policy iteration, and linear programming approach~\cite{Puterman:MDP94}.
 Shlakhter et al. \cite{shlakhter:2005} proposed accelerating operators to improve the convergence of the value iteration algorithm. Let us briefly discuss this technique. Consider the linear programming formulations for discounted MDPs
\begin{equation} \label{eq:LP}
\max \left\{\sum_i h(i) \mid{}
h(i)-\alpha \sum_{j=1}^{n}p_{ij}(a)h(j)\le{}r(i,a), \forall i\in
\overline{1,n}, \forall a\in A(i), h \in \R^{n}. \right\}
\end{equation}

Let $V$ be the feasible set of linear program. It can be determined by thesystem of inequalities
$$V= \left \{h \mid{}
h(i)-\alpha \sum_{j=1}^{n}p_{ij}(a)h(j)\le{}r(i,a), \forall i\in
\overline{1,n}, \forall a\in A(i), h \in \R^{n} \right\}.$$ Let
$T$ be the Markovian operator
$$
    T h(i)=\min_{a \in A(i)}\left \{ r(i,a) + \alpha \sum_{j=1}^{n}p_{ij}(a)h(j) \right\}
$$

 The most crucial observation, that leads to
characterization of the acceleration operators, is that the set
$V$ is invariant under $T$, as formally stated in
Lemma \ref{lem:invariance_disc}.

\begin{lem}
  \label{lem:invariance_disc}
  $V$ is invariant under $T$. That is, $T V \subset V$.
\end{lem}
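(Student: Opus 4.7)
The plan is to exploit a clean reformulation of $V$: the defining inequalities $h(i) - \alpha \sum_j p_{ij}(a) h(j) \le r(i,a)$ for all $i, a$ say exactly that $h(i) \le r(i,a) + \alpha \sum_j p_{ij}(a) h(j)$ for every $a \in A(i)$. Taking the minimum over $a$, this is equivalent to $h \le T h$ pointwise. So $V$ is precisely the set of ``subsolutions'' of the Bellman equation.

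With that reformulation, the proof reduces to showing that $T$ preserves subsolutions, and for this I would only need monotonicity of $T$. Specifically, first I would argue that $T$ is monotone: if $u \le w$ pointwise, then since $p_{ij}(a) \ge 0$ and $\alpha \ge 0$, we have $r(i,a) + \alpha \sum_j p_{ij}(a) u(j) \le r(i,a) + \alpha \sum_j p_{ij}(a) w(j)$ for every $a$, and taking the minimum over $a$ preserves the inequality, giving $Tu \le Tw$.

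Now suppose $h \in V$. By the reformulation, $h \le Th$. Applying the monotonicity of $T$ to this inequality yields $Th \le T(Th)$, i.e., $Th$ is itself a subsolution. Unpacking the definition of $T(Th)$, this means that for every $i$ and every $a \in A(i)$,
\begin{equation*}
    Th(i) \le r(i,a) + \alpha \sum_{j=1}^{n} p_{ij}(a) \, Th(j),
\end{equation*}
which after rearrangement is exactly the system of inequalities defining membership in $V$. Hence $Th \in V$, proving $TV \subset V$.

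I do not expect any real obstacle: the only non-trivial content is recognizing that $V = \{h : h \le Th\}$, after which the invariance is an immediate consequence of monotonicity of $T$. The proof does not use the contraction property at all, only that $\alpha \ge 0$ and the transition probabilities are nonnegative, so the same argument will later transfer to the average-reward/stochastic-shortest-path setting with essentially no change.
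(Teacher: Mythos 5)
Your proof is correct and follows essentially the same route as the paper: the paper (via the identical argument for Lemma \ref{lem:invariance}) also identifies $V$ with $\{h : h \le Th\}$, notes $Th \ge h$ for $h \in V$, and applies monotonicity of $T$ to conclude $T(Th) \ge Th$, hence $Th \in V$. Your write-up just makes the monotonicity step and the reformulation of $V$ explicit, which the paper delegates to its separate monotonicity lemma.
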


Lemma \ref{lem:invariance_disc}
suggests the following conditions for acceleration operator $Z$ on
defined set $V$.

\bigskip

\noindent {\bf Acceleration Conditions}
\begin{description}
\item [(A)] $Z V\subset V$,
\item [(B)] $Z v \ge v, \; \forall v \in V$.
\end{description}
\bigskip

\begin{rem}
 \label{r:convergence_disc}
It is easy to show that if $v^*$ is the fixed point of operator $T$, $v^{n+1}=
Tv^{n}$, $w^{n+1}=ZTw^{n}$, and $w^0=v^0$ then
  \begin{enumerate} \vspace{-0.2in}
  \renewcommand{\labelenumi}{(\roman{enumi})}
  \item $v^*\ge{}w^n\ge{}v^n$,\vspace{-0.2in}
  \item $v^*=\lim_{n\to\infty}w^n=\lim_{n\to\infty}v^n$,\vspace{-0.2in}
  \item $\norm{v^*-w^n}\le\norm{v^*-v^n}$.\vspace{-0.2in}
  \item The sequence $\{w^n\}$ converges globally with order 1 at a rate less than or equal to $\alpha$; its global asymptotic average rate of convergence is less than or equal to $\alpha$.
  \end{enumerate}
\end{rem}

In \cite{shlakhter:2005} we presented two particular realization of acceleration
operator $Z$: Projective Operator and Linear Extension Operator.

For a given operator satisfying two conditions {\bf (A)} and {\bf
(B)}, several variants of accelerated value iteration algorithm were
be suggested. More detailed discussion regarding application of this method to discounted MDP models can be found in the original paper. Further we will present the extended explanation of this technique applied to average reward MDPs.

\subsection{Bertsecas Approach}
\label{sec:bert-VI}

The most important properties of the Markovian operator $T$ of discounted or total cost(stochastic shortest path) used in the previous section are contraction property with respect to max-norm and monotonicity property. It is well known, however,  that the classical value iteration methods for average cost criteria MDP, unlike discounted and total cost models, do not involve the max-norm contraction or monotonicity property.  This means that direct application of acceleration operators technique to average MDP models is impossible.
Bertsekas \cite{bertsekas:2001, bertsekas:1998}  proposed a way of constructing the variants of
the relative value iteration algorithm, using the connection between the
average cost models and the corresponding stochastic shortest path
problem. Under assumption that all policies are unichain and there exists a recurrent state under all policies, the Markovian operator is a contraction mapping with respectto a weighted $\max$-norm, and possesses the monotonicity property. We will show how this approach combined with acceleration operators technique can be used to get more efficient variants of value iteration for average reward MDP models.

Let us briefly describe the approach proposed by Bertsakas \cite{bertsekas:2001,
bertsekas:1998}. Consider an infinite horizon Markov decision process (MDP) with a finite set of states denoted by $S$, a finite set of actions $A(i)$ for each state $i \in S$, an
immediate reward $r(i,a)$ for each $i \in S$ and $a \in A=\cup_{i \in S}A(i)$, and a
transition probability $p_{ij}(a)$ for each $i,j \in S$ and $a\in A(i)$. The
objective is to determine $\lm^*$, the minimum average cost per state over an
infinite horizon starting in state $i$, which satisfies the
optimality equation
\begin{equation} \label{MDP}
\lm^* +{h(i)}^*=\min_{a\in A(i)}\left\{r(i,a)+ \sum_{j\in S} p_{ij}(a) {h(j)}^*\right\},
\end{equation}
where $h^*$ is a differential vector.

Let us assume that there is a state, denoted by $n$, which is a recurrent state
under any stationary policy. Consider the stochastic shortest path problem (denoted $\lm$-SSP) obtained
from original model by adding a new state $t$ such that ${p }'_{it}=p_{in}$ for each $i \in S$, ${p }'_{ij}=p_{ij}$ for each $i \in S$, $j \in S\setminus \{n\}$, and ${p }'_{in}=0$ for each $i \in S$, where ${p }'_{ij}$ is the transition probability of the new model. The costs of the new model will
be equal to $r_i(a) -\lm$ for each $i \in S$, where $\lm$
is a scalar parameter, and $r_t(\cdot)=0$ for terminating state $t$.
Let $h_{\pi, \lm}(i)$ be the total expected cost of stationary
policy $\pi$ starting from state $i$, and let
the function $h_{\lm} (i)= \max_{\pi} h_{\pi,\lm}(i),~~~i=1,...,n$. One can
show that the functions $h_{\lm}(i)$ are concave, monotonically decreasing, and
piecewise linear as functions of $\lm$, and that
\begin{equation}\label{eq:4.56}
    h_{\lm} (n)= 0 ~~\text{if and only if} ~~~\lm=\lm^*
\end{equation}
Furthermore, the vector $h_{\lm^*}$, together with $\lm^*$,
satisfies the optimality equation
\begin{equation}\label{eq:optimality}
   \lm e+h_{\lm^*}=Th_{\lm^*}.
\end{equation}

As it was shown in ~\cite{bertsekas:2001, bertsekas:1998} $\lm ^*$
can be found using one-dimensional search procedure. It requires
solution of several associated stochastic shortest path problems, for which
value of parameter $\lm$ is updated as
\begin{equation}\label{eq:lm}
    \lm^{k+1}=\lm^k+\gm^k h_{\lm^k}(n)
\end{equation}
where $h_{\lm^k}(n)$ is the optimal solution of $\lm^k$-SSP which
can be found using the value iteration in form
\begin{equation}\label{eq:4.58}
    h^{m+1}(i)=\min_{a \in A(i)}\left \{ r(i,a)-\lm^k + \sum_{j=1}^{n-1}p_{ij}(a)h^m(j)
    \right\} \text{ for all } i\in \overline{1,n}.
\end{equation}
with $\lm^k$ fixed throughout the value iteration procedure.

\begin{rem} \label{r:step-size}
As it was shown in~\cite{bertsekas:2001, bertsekas:1998}, the sequence of iterates $(\lm^k,h_{\lm^k}(n))$
converges to $(\lm^*, h_{\lm^*}(n))$, provided that stepsize $\gm^k
\le \frac{1}{max_{\pi}N_{\pi}(n)}$, where $N_{\pi}(i)$ is the expected value of the first positive time that
$n$ is reached under $\pi$ starting from state $i$. It is also easy to see that, if additionally $h^0(n)\le 0$, then the sequence $\lm^k$ converges monotonously to $\lm^*$, which is $\lm^k \downarrow \lm^*$.
\end{rem}

The more efficient form of the algorithm proposed in
~\cite{bertsekas:2001, bertsekas:1998} is to update parameter
$\lm^k$ for each iteration
\begin{equation}\label{eq:4.59}
    h^{k+1}(i)=\min_{a \in A(i)}\left \{ r(i,a)-\lm^k + \sum_{j=1}^{n-1}p_{ij}(a)h^n(j) \right\} \text{ for all } i\in \overline{1,n}
\end{equation}
where the parameter $\lm^{k+1}=\lm^k + \gamma^k h^{k+1}(n)$,
$\gamma^n$ is a positive, sufficiently small step size, and
$h^{k+1}$ is a current approximation of optimal solution of
$h_{{\lm}^{k+1}}$ of corresponding $\lm^{k+1}$-SSP. It was also
proposed the improved variant of above algorithm, which is based on
the following inequality
\begin{equation}\label{eq:4.60a}
   \underline{\beta}^k \le \lm \le \overline{\beta}^k
\end{equation}
where
\begin{equation}\label{eq:4.60b}
   \underline{\beta}^k = \lm^k + \min \left[\min_{i\neq
   n}[h^{k+1}(i)-h^k(i)],h^{k+1}(n)\right],
\end{equation}

\begin{equation}\label{eq:4.60c}
\overline{\beta}^k = \lm^k + \max \left[\max_{i\neq
   n}[h^{k+1}(i)-h^k(i)],h^{k+1}(n)\right]
\end{equation}

Using this inequality it is possible to replace the iteration
$\lm^{k+1}=\lm^k + \gamma^k h^{k+1}(n)$ by $\lm^{k+1}=\Pi_k
\left[\lm^k + \gamma^k h^{k+1}(n) \right]$, where $\Pi_k \left[c
\right]$ denotes the projection of a scalar $c$ on the interval

\begin{equation}\label{eq:4.60d}
   \left[ \max_{m=0,...,k}\underline{\beta}^m, \min_{m=0,...,k} \overline{\beta}^m \right],
\end{equation}

\begin{rem} \label{r:term.-state}
Since the absorbing state $t$ has 0 cost, the cost of any policy
starting from $t$ is 0. Because of this we can ignore the the
component corresponding to the state $t$ and exclude it from
summation. Since for all states $i$  the transition probabilities
$p_{in}(a)=0$ the component corresponding to the state $n$ can be
also ignored.
\end{rem}

\begin{rem} \label{r:dimension}
Though the state space of the original stochastic shortest path problems
has the dimension $n+1$, taking into consideration the statement of
first part of remark ~\ref{r:term.-state} and because the Markovian
operator (\ref{eq:4.58}) and (\ref{eq:4.59}) does not change the value
in state $t$, we will consider this problem as problem in
$n$-dimensional state space.

\end{rem}
The proof of the convergence of this algorithm can be found in
original paper (see \cite{bertsekas:1998}).

Using this lemma it is easy to show that
\begin{equation}\label{eq: fixedpoint}
    \norm{h^k-h_{\lm^*}} \le
    \norm{h^k-h_{\lm^k}}+\norm{h_{\lm^k}-h_{\lm^*}} \le
    \norm{h^k-h_{\lm^k}}+O(\abs{\lm^k-\lm^*})
\end{equation}

It was shown in ~\cite{bertsekas:2001, bertsekas:1998} that both of sequences $h^k$ and $\lm^k$ converge
at rate of a geometric progression.

One of the disadvantages of the proposed algorithms is that the rate of
convergence of these methods is relatively slow. To bypass this we propose the acceleration
operators technique introduced in ~\cite{shlakhter:2005} to
accelerate convergence of discounted MDP models. As we will
show, this approach, which speeds up the convergence of value
iteration algorithms, can be very efficient for solving average cost
MDPs.

\subsection{Accelerating Operators Approach (Average Reward MDP Case)}
\label{sec:accel-VI-av}
In this section we show how the acceleration operators technique can be applied to average reward MDP models

\subsubsection{Reduction to Discounted Case}
\label{sec:reduction}
The acceleration operators can be directly applied to the following special case of the average MDP \cite{bertsekas:2001}. Assume that there is a state $t$ such that for some $\beta >0$ we have $p_{it}(a) \ge \beta$ for each $i \in S$. One can see that $(1-\beta)$-discounted problem with the same state space, and actions, and transition probabilities
\begin{equation}\label{eq:reduction}
    \overline{p}_{ij}(a)=\bigg \{\begin{array}{lc}
                                   (1- \beta)^{-1}p_{ij}(a) &\text {         if } j\neq t, \\
                                   (1-\beta)^{-1}(p_{ij}(a)-\beta) &\text { if } j= t.
                                 \end{array}
\end{equation}
Then $\beta \overline{v}(t)$ and $\overline{v}(i)$ are optimal average and differential costs, where $\overline{v}$ is the optimal cost function of the corresponding $(1-\beta)$-discounted problem.

Another straightforward application of the acceleration operators is to use the following relationship
\begin{equation}\label{eq:limit}
    \lm^* = \lim_{\alpha\rightarrow 1}(1-\al)v_{\al}(i)
\end{equation}
for each state $i \in S$, where $v_{\al}$ is an optimal cost vector for the corresponding $\al$-discounted problem.

Let's briefly discuss this relation. Getting a good approximation of the average cost $\lm^*$ using formula (\ref{eq:reduction}) requires calculation of the expected total discounted cost for discount factor close to 1. In general, for an MDP with the discount factor close to 1 this problem is considered to be computationally very demanding. Accelerating operators show a highly efficient way of solving such MDPs, making this relation useful for finding the solutions of the average cost MDP problems. This relation can also be used to obtain the upper and lower bounds for optimal average cost. It is well known \cite{Puterman:MDP94} that total discounted reward can be expressed in terms of optimal average cost and optimal bias $h$ as
 \begin{equation}\label{eq:bias}
    v_{\al}=(1-\al)^{-1}\lm+h+f(\al)
 \end{equation}
where  $h\equiv H_p r$, $H_p$ is a fundamental matrix $H_p \equiv (I-P-P^*)^{-1}(I-P^*)$, and $P^*= \lim_{N \rightarrow \infty}\frac{1}{N}E_s\{ \sum_{t=1}^N r(X_t) \}$, and  $f(\al)$ is a vector which converges to zero as $\al\uparrow1$. The additional property of the relationship (\ref{eq:bias}) can be formulated as following lemma

\begin{lem}
  \label{lem:bounds}
 For any MDP problem there exists $\eps$ ($0 \le \varepsilon < 1$) such that for any $\al \in (\eps, 1)$ there exist two coordinates $i$ and $j$ that $v_{\alpha}(i)< \lm^* < v_{\alpha}(j)$
\end{lem}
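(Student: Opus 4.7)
The plan is to build the proof on the asymptotic expansion (\ref{eq:bias}), i.e., $v_\al(i) = (1-\al)^{-1}\lm^* + h(i) + f(\al)(i)$ with $\norm{f(\al)}_\infty \to 0$ as $\al \uparrow 1$. Multiplying by $(1-\al)$ gives $(1-\al)v_\al(i) - \lm^* = (1-\al)\,(h(i) + f(\al)(i))$, so whether the normalized value $(1-\al)v_\al(i)$ sits above or below $\lm^*$ is controlled, for $\al$ near $1$, purely by the sign of the bias component $h(i)$; this is the sense in which the lemma yields the upper and lower bounds on $\lm^*$ advertised just before the statement.

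First I would show that $h = H_p r$ has at least one strictly positive and one strictly negative coordinate whenever $h \not\equiv 0$. This is immediate from the identity $P^* h = 0$, which follows from the definition $H_p = (I-P-P^*)^{-1}(I-P^*)$ together with the idempotence $P^* P^* = P^*$ and hence $P^*(I-P^*) = 0$. Since each row of $P^*$ is a stationary distribution, $P^* h = 0$ forces $h$ to have zero weighted mean on every recurrent class, so $h$ cannot be of constant sign unless it vanishes identically. Pick states $i_+, i_-$ with $h(i_+) > 0 > h(i_-)$ and set $\dl = \min\{h(i_+),\, -h(i_-)\} > 0$.

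Next, choose $\eps \in [0,1)$ such that $\norm{f(\al)}_\infty < \dl/2$ for all $\al \in (\eps, 1)$; this is possible because $f(\al) \to 0$. On this interval the perturbed quantities $h(i_\pm) + f(\al)(i_\pm)$ retain the signs of $h(i_\pm)$, and multiplying by $(1-\al) > 0$ yields $(1-\al)v_\al(i_-) < \lm^* < (1-\al)v_\al(i_+)$, the required two-sided bound on $\lm^*$ (the lemma is to be read in this normalized form, matching (\ref{eq:limit})).

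The main obstacle is the degenerate case $h \equiv 0$, in which every normalized coordinate approaches $\lm^*$ from the same side and the sign argument collapses. To handle it one either imposes a nondegeneracy hypothesis excluding this case, or passes to the next term $w$ in the Laurent expansion $v_\al = (1-\al)^{-1}\lm^* + h + (1-\al)\,w + O((1-\al)^2)$ and reruns the orthogonality/sign-change argument for $w$, which satisfies an analogous higher-order zero-mean identity under $P^*$.
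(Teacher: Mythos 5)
Your proof follows essentially the same route as the paper's: both use the identity $P^*h=0$ (with $P^*$ positive) to force the bias $h$ to have coordinates of both signs, then let $\al\uparrow 1$ in the expansion (\ref{eq:bias}) so that $f(\al)$ cannot disturb those signs, reading the conclusion in the normalized form $(1-\al)v_\al$. If anything you are more careful than the paper, which dismisses only the case $r\equiv 0$ whereas the genuine degenerate case is $h\equiv 0$ (possible even for nonconstant $r$, e.g.\ constant rewards), a gap you correctly flag and propose to repair via the next term of the Laurent expansion.
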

This means that $\min_{i}v_{\alpha}(i)$ and $\max_{i}v_{\alpha}(i)$ provide the lower and upper bound for $\lm^*$ for all $\al$ sufficiently close to 1. Though the above lemma doesn't provide guidance on how to choose the discount factor $\al$, the numerical studies show that taking the discount factor to be equal to 0.8 or higher gives us  sufficiently good results (see Table \ref{tab:2} of Section \ref{sec:numerical-studies}).

\subsubsection{Variants of Accelerated Value Iterations for Average Reward MDPs}
\label{sec:accel-VI-av-var}

In this section we propose several variants of accelerated value iteration algorithms.
Consider the following linear programming formulations, which are equivalent to
associated $\lm^k$-stochastic shortest path problems
\begin{equation} \label{eq:LP}
\max \left\{\sum_i h(i) \mid{}
h(i)-\sum_{j=1}^{n-1}p_{ij}(a)h(j)\le{}r(i,a)-\lm^k, \forall
i \in S, \forall a\in A(i), h \in \R^{n}. \right\}
\end{equation}

Let the set $H_{\lm^k}$ be determined by the system of inequalities shown below:
$$H_{\lm^k}= \left \{h \mid{}
h(i)-\sum_{j=1}^{n-1}p_{ij}(a)h(j)\le{}r(i,a)-\lm^k, \forall i \in S, \forall a\in A(i), h \in \R^{n} \right\},$$ and
$T_{\lm^k}$ be Markovian operator
$$
    T_{\lm^k}h(i)=\min_{a \in A(i)}\left \{ r(i,a)-\lm^k + \sum_{j=1}^{n-1}p_{ij}(a)h(j) \right\}
$$

  An observation, similar to Lemma \ref{lem:invariance_disc} of the previous section characterizing the acceleration operators, is that the set
$H_{\lm^k}$ is invariant under $T_{\lm^k}$ as formally stated in
Lemma \ref{lem:invariance}.

\begin{lem}
  \label{lem:invariance}
  $H_{\lm^k}$ is invariant under $T_{\lm^k}$. That is, $T_{\lm^k}H_{\lm^k} \subset H_{\lm^k}$.
\end{lem}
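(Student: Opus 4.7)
The plan is to reduce membership in $H_{\lambda^k}$ to a fixed-point style inequality and then exploit monotonicity of the Markovian operator $T_{\lambda^k}$, mirroring the argument behind Lemma \ref{lem:invariance_disc}.

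First I would rewrite the defining inequalities of $H_{\lambda^k}$ in operator form. The condition $h(i) - \sum_{j=1}^{n-1} p_{ij}(a) h(j) \le r(i,a) - \lambda^k$ holding for every $a \in A(i)$ is equivalent to
\begin{equation*}
h(i) \le \min_{a \in A(i)} \left\{ r(i,a) - \lambda^k + \sum_{j=1}^{n-1} p_{ij}(a) h(j) \right\} = T_{\lambda^k} h(i).
\end{equation*}
Hence $h \in H_{\lambda^k}$ if and only if $h \le T_{\lambda^k} h$ componentwise. This is the key reformulation that makes the claim transparent.

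Next I would record the monotonicity of $T_{\lambda^k}$: if $u \le v$, then for each $i$ and each $a \in A(i)$, since $p_{ij}(a) \ge 0$, one has $r(i,a) - \lambda^k + \sum_{j=1}^{n-1} p_{ij}(a) u(j) \le r(i,a) - \lambda^k + \sum_{j=1}^{n-1} p_{ij}(a) v(j)$, and taking the minimum over $a$ preserves the inequality, yielding $T_{\lambda^k} u \le T_{\lambda^k} v$.

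Now fix $h \in H_{\lambda^k}$. By the reformulation, $h \le T_{\lambda^k} h$. Applying monotonicity of $T_{\lambda^k}$ to both sides gives $T_{\lambda^k} h \le T_{\lambda^k}(T_{\lambda^k} h)$, which by the reformulation again means precisely that $T_{\lambda^k} h \in H_{\lambda^k}$. This establishes $T_{\lambda^k} H_{\lambda^k} \subset H_{\lambda^k}$.

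There is no serious obstacle here; the argument is essentially a one-line observation once the defining inequalities are read as $h \le T_{\lambda^k} h$. The only small point to be careful about is that the summation ranges over $j=1,\dots,n-1$ rather than all states (because state $n$ is treated as absorbing/reset via Remarks \ref{r:term.-state} and \ref{r:dimension}), but this truncation does not affect monotonicity since the remaining coefficients are still non-negative, so the same proof carries through verbatim.
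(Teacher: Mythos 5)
Your proof is correct and follows essentially the same route as the paper's: read membership in $H_{\lm^k}$ as $h \le T_{\lm^k}h$, then apply monotonicity of $T_{\lm^k}$ (which the paper isolates as Lemma~\ref{l:monotonicity}) to conclude $T_{\lm^k}h \le T_{\lm^k}(T_{\lm^k}h)$. The only cosmetic difference is that you re-derive monotonicity inline rather than citing the auxiliary lemma.
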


The following statement holds:

\begin{lem}
  \label{lem:invariance1}\
If $\lm^{k+1} \le \lm^k$, then  $H_{\lm^k} \subset H_{\lm^{k+1}}$.
If $\lm^{k+1} < \lm^k$, then  $H_{\lm^k} \subset
int(H_{\lm^{k+1}})$.
\end{lem}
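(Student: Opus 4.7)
The plan is to prove both inclusions by directly comparing the defining inequalities of $H_{\lm^k}$ and $H_{\lm^{k+1}}$, exploiting the fact that the two sets differ only in the constant terms on the right-hand side of their defining inequalities. Since the coefficient matrix of the inequalities (the left-hand side) is identical in both systems, everything reduces to comparing $r(i,a)-\lm^k$ with $r(i,a)-\lm^{k+1}$.

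For the first (non-strict) inclusion, I would take an arbitrary $h \in H_{\lm^k}$ and verify that it satisfies every defining inequality of $H_{\lm^{k+1}}$. Specifically, for each $i \in S$ and $a \in A(i)$,
\begin{equation*}
 h(i)-\sum_{j=1}^{n-1}p_{ij}(a)h(j)\le r(i,a)-\lm^k \le r(i,a)-\lm^{k+1},
\end{equation*}
where the second inequality uses $\lm^{k+1}\le \lm^k$. This immediately gives $h \in H_{\lm^{k+1}}$, hence $H_{\lm^k}\subset H_{\lm^{k+1}}$.

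For the second (strict) inclusion, I would refine the slack estimate. When $\lm^{k+1} < \lm^k$, set $\dl = \lm^k - \lm^{k+1} > 0$. For any $h\in H_{\lm^k}$ and any perturbation $h'$ with $\norm{h'-h}_\infty < \eps$, a straightforward bound on the change of the left-hand side shows
\begin{equation*}
 h'(i)-\sum_{j=1}^{n-1}p_{ij}(a)h'(j) \le h(i)-\sum_{j=1}^{n-1}p_{ij}(a)h(j) + \eps\Bigl(1+\sum_{j=1}^{n-1}p_{ij}(a)\Bigr) \le r(i,a)-\lm^k + 2\eps.
\end{equation*}
Choosing $\eps < \dl/2$ makes the right-hand side strictly less than $r(i,a)-\lm^{k+1}$ uniformly in $i$ and $a$, so the entire $\eps$-ball around $h$ lies in $H_{\lm^{k+1}}$. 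Hence $h \in \Int(H_{\lm^{k+1}})$, giving $H_{\lm^k}\subset \Int(H_{\lm^{k+1}})$.

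There is no real obstacle here; the statement is essentially a monotonicity-in-parameter observation about polyhedral feasible regions defined by inequalities of the form $Ah\le b(\lm)$ with $b(\lm)$ componentwise decreasing in $\lm$. The only mild care needed is in the strict case, where one must verify that the slack $\dl$ is uniform across all constraints (it is, since $\lm^k$ enters each constraint the same way), so a single open ball works for every $h \in H_{\lm^k}$.
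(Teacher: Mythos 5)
Your proof is correct and takes essentially the same approach as the paper: both arguments reduce to comparing the right-hand sides $r(i,a)-\lm^k \le r(i,a)-\lm^{k+1}$ of the otherwise identical defining inequalities. Your treatment of the strict case is in fact slightly more careful than the paper's, which stops at observing that $h$ satisfies all the constraints of $H_{\lm^{k+1}}$ strictly, whereas you explicitly exhibit the open ball certifying $h \in \Int(H_{\lm^{k+1}})$.
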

 It is easy to notice that if $h^{k+1}(n) \le 0,$  then
$\lm^{k+1}=\lm^k+{\gm}^kh^{k+1}(n) \le \lm^k$. These two lemmas
suggest the following conditions for acceleration operator $Z_k$ on
defined set $H_{\lm^k}$.

\bigskip

\noindent {\bf Acceleration Conditions}
\begin{description}
\item [(A)] $Z_k H_{\lm^k}\subset H_{\lm^{k}}$,
\item [(B)] $Z_{k} h > h, \; \forall h \in H_{\lm^k}$.
\end{description}
\bigskip

The properties of operator $Z_k$ satisfying the conditions {\bf (A)}
and {\bf (B)} are given in the following lemma
\begin{lem}
 \label{l:convergence}
If $h^0 \in H_{\lm^0}$, $\lm^k \downarrow\lm^* $, $h^{k+1}=
T_{\lm^k}h^{k}$, $\wt{h}^k =Z_{k} h^k$, then
  \begin{description} \vspace{-0.2in}
  \renewcommand{\labelenumi}{(\roman{enumi})}
  \item [(i)] $h^*=\lim_{k\to\infty}h^k=\lim_{k\to\infty}\widetilde{h}^k$,\vspace{-0.2in}
  \item [(ii)] $h^*\ge h_{\lm^k}\ge{}\wt{h}^k \ge {}h^k$,\vspace{-0.2in}
  \item [(iii)] $\norm{h^*-h^k} \ge \norm{h^*-\wt{h}^k}$, where $\norm{\cdot}$ is a weighted $\max$-norm.\vspace{-0.2in}
  \end{description}
\end{lem}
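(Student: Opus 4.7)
The plan is to prove (ii), (iii), and (i) in that order, since the sandwich inequality (ii) makes the other two statements nearly immediate. The main preparatory step is an induction on $k$ showing that $h^k \in H_{\lm^k}$ for every $k$. The base case is the hypothesis $h^0 \in H_{\lm^0}$. For the inductive step, assuming $h^k \in H_{\lm^k}$, Lemma \ref{lem:invariance} gives $h^{k+1}=T_{\lm^k}h^k \in H_{\lm^k}$, and since $\lm^k \downarrow \lm^*$ implies $\lm^{k+1} \le \lm^k$, Lemma \ref{lem:invariance1} yields $H_{\lm^k} \subset H_{\lm^{k+1}}$, whence $h^{k+1} \in H_{\lm^{k+1}}$.

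With $h^k \in H_{\lm^k}$ in hand, I would assemble (ii) from four pieces. Acceleration condition (B) gives $\widetilde{h}^k = Z_k h^k \ge h^k$, while (A) gives $\widetilde{h}^k \in H_{\lm^k}$. For any $h \in H_{\lm^k}$, the defining inequalities of $H_{\lm^k}$ are precisely $h \le T_{\lm^k} h$ componentwise. Iterating this and using monotonicity together with the weighted max-norm contractivity of $T_{\lm^k}$ (which holds for the $\lm^k$-SSP under the unichain/recurrent-state hypothesis invoked by Bertsekas), the sequence $\{T_{\lm^k}^m h\}_{m\ge 0}$ is nondecreasing and converges to the unique fixed point of $T_{\lm^k}$, which is precisely $h_{\lm^k}$. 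Applying this to $h^k$ and $\widetilde{h}^k$ gives $h^k \le \widetilde{h}^k \le h_{\lm^k}$. Finally, from the stated monotone decrease of $h_\lm$ in $\lm$ together with $\lm^k \ge \lm^*$, we get $h_{\lm^k} \le h_{\lm^*}=h^*$, closing the chain $h^k \le \widetilde{h}^k \le h_{\lm^k} \le h^*$.

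Claim (iii) is then immediate: the componentwise bounds $0 \le h^* - \widetilde{h}^k \le h^* - h^k$ translate, by positivity of the weights defining the weighted max-norm, into $\norm{h^*-\widetilde{h}^k} \le \norm{h^*-h^k}$. For (i), the convergence $h^k \to h^*$ is the known convergence result for Bertsekas' algorithm; combined with $h_{\lm^k} \to h_{\lm^*}=h^*$ (continuity of $h_\lm$ in $\lm$) and the squeeze $h^k \le \widetilde{h}^k \le h_{\lm^k}$ from (ii), this yields $\widetilde{h}^k \to h^*$.

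The main obstacle is the ``domination by the fixed point'' step showing that $h \in H_{\lm^k}$ implies $h \le h_{\lm^k}$, since it requires a coordinated use of monotonicity of $T_{\lm^k}$, its weighted max-norm contraction (hence uniqueness of and convergence to the fixed point), and the identification of that fixed point with the LP optimum $h_{\lm^k}$. One minor point to flag is that condition (B) is stated with strict inequality $Z_k h > h$, whereas (ii) only needs $\ge$; using the weak form suffices and avoids any apparent conflict at $h=h_{\lm^k}$ itself, where (A) forces $Z_k h_{\lm^k} \le h_{\lm^k}$.
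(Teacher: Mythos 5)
Your proof is correct and follows essentially the same route as the paper's own (very terse) argument: monotonicity of $T_{\lm^k}$ plus Condition \textbf{(B)} for (ii), the known convergence of the Bertsekas iterates for (i), and the componentwise sandwich for (iii). You merely supply details the paper leaves implicit --- the induction showing $h^k\in H_{\lm^k}$ and the fixed-point domination step $h\in H_{\lm^k}\Rightarrow h\le h_{\lm^k}$ --- and your observation that the strict inequality in \textbf{(B)} cannot hold at $h_{\lm^k}$ itself is a valid minor correction to the paper's formulation.
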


Later on we will present two particular realizations of acceleration
operator $Z$.

For a given operator satisfying two conditions {\bf (A)} and {\bf
(B)}, several variants of accelerated value iteration algorithms can
be suggested. Notice that different acceleration operators may be
used in different iterations of the value iteration algorithm, in
which case $Z_k$ is an acceleration operator used in iteration $k$,
instead of $Z$.

Consider a variant of VI where
$\lm$ is kept fixed in all iterations until we obtain $h_{\lm^k}$,
the optimal solution of $\lm^k$-SSP, and after that we calculate
$\lm^{k+1}=\lm^k+\gm^kh_{\lm^k}$.  The
acceleration operators $Z_k$ such that $\wt{h}^{k+1}=Z_k T_{\lm^k}\wt{h}^{k}$ make this algorithm computationally attractive. It is
motivated by the fact that for this variant of VI we can guarantee
the monotonicity of the sequence $h_{\lm^k}(n)$ for stepsize satisfying
the inequality $\gm^k \le \frac{1}{max_{\pi}N_{\pi}(n)}$ of Remark \ref{r:step-size}, and as corollary the
monotonicity of the sequence $\lm^k$. This method can be improved using the following way of $\lm$ update. If $\lm^k$ and $\lm^{k+1}$ are two approximations of the value $\lm^*$ such that $\lm^* < \lm^{k+1} \le \lm^k$ and $h_{\lm^k}(n) \ge h_{\lm^{k+1}}(n) > 0$, then it is guaranteed that $\tilde{\lm} \ge \lm^*$, where $\tilde{\lm}=\lm^k-\frac{h_{\lm^k}(n)(\lm^{k+1}-\lm^k)}{h_{\lm^{k+1}}(n)-h_{\lm^{k}}(n)}$ is a point of intersection of a straight line through points with coordinates $(\lm^k, h_{\lm^k}(n))$ and $(\lm^{k+1}, h_{\lm^{k+1}}(n))$ with $\lm$ axis on $(\lm, h_{\lm})$ graph. Then we can take $\lm^{k+2}= \min \{\lm^{k+1}+\gamma h_{\lm^{k+1}}(n), \tilde{\lm} \}$.  This $\lm$ update is graphically illustrated in Figure~\ref{f:stepsize}.

\vspace{1in}
\begin{center}
Figure~\ref{f:stepsize} goes here.
\end{center}
\vspace{1in}

A formal description of this variant of VI is given below:

\renewcommand{\baselinestretch}{1.5}
\noindent{\bf General Accelerated Value Iteration Algorithm 1 (GAVI
1)} \vspace{-0.4in}
\begin{quotation}
\texttt{
\begin{description}
  \item [Step 0] Select $\lm^0$ such that $h_{\lm^0}(n) <0$, $\wt{h}^0=h^0 \in H_{\lm^0}$,  set $k=0$, and specify $\eps >0$.
  \item [Step 1] Compute $h_{\lm^{k}}(i)= \lim_{m\rightarrow\infty}Z_k T_{\lm^k}\wt{h}^{m}(i)$ for all $i\in{I}$.
  \item [Step 2] If $\norm{h_{\lm^{k}}(n)}>\eps$, go to {\bf Step 3}. Otherwise compute $\lm^{k+1}= \min\{ \lm^k+ \gm^k h_{\lm^{k}}(n), \tilde{\lm}\}$, where $\tilde{\lm}=\lm^k-\frac{h_{\lm^k}(n)(\lm^{k+1}-\lm^k)}{h_{\lm^{k+1}}(n)-h_{\lm^{k}}(n)}$, increase $k$ by 1
  and return to {\bf Step 1}.
  \item [Step 3] Return with the actions attaining the minimum in {\bf Step 1}.
\end{description}
}
\end{quotation}
\renewcommand{\baselinestretch}{2.0}

\begin{rem}
 \label{r:bounds1}
The choice of $\lm^0$ satisfying the conditions of Step 1 is always
available,  because $\lm_{\min} \le \lm^* \le \lm_{\max}$, where
$\lm_{\min}=\min_{i \in I, a \in A(i)}r(i,a)$ and
$\lm_{\max}=\max_{i \in I, a \in A(i)}r(i,a)$. It is easy to see
that $h_{\lm_{\max}}(i) \le 0$ and $h_{\lm_{\min}}(i) \ge 0$ for all
$i \in I$.
\end{rem}

\begin{rem}
 \label{r:bounds2}
The performance can be significantly improved if $\lm^0$ is taken as a upper bound for $\lm^*$ from the statement derived at the end of the section \ref{sec:reduction}.
\end{rem}

 An alternative variant of accelerated VI, though not very efficient, can be obtained from (\ref{eq:4.59}), where the parameter $\lm^{k}$ is updated at every
iteration. The direct application of acceleration technique
to a variant with parameter $\lm^{k+1}=\lm^k + \gamma^k h^{k+1}(n)$ is
impossible, because the monotonicity of the sequence $\lm^k$ can not be
guaranteed. We propose to apply the acceleration step only for iterates for which  $h^k(n)$ remains negative. Although this method is not guaranteed to be applicable for all iterates, it can still be more efficient than the standard value iteration.
A formal description of this variant of VI is given below:

\renewcommand{\baselinestretch}{1.5}
\noindent{\bf General Accelerated Value Iteration Algorithm 2 (GAVI
2)} \vspace{-0.4in}
\begin{quotation}
\texttt{
\begin{description}
  \item [Step 0] Select $\lm^0$ such that $h_{\lm^0}(n) <0$, $\wt{h}^0=h^0 \in H_{\lm^0}$,  set $k=0$, and specify $\eps >0$.
  \item [Step 1a] Compute $\wt{h}^{k+1}(i)=Z_k T_{\lm^k}\wt{h}^{k}(i)$ for all
  $i\in{I}$ until $h^k(n)$ remains negative
  \item [Step 1b] Compute $\wt{h}^{k+1}(i)=T_{\lm^k}\wt{h}^{k}(i)$ for all
  $i\in{I}$ otherwise
  \item [Step 2] If $\norm{\wt{h}^{k+1}-\wt{h}^{k}}>\eps$, go to {\bf Step 3}. Otherwise increase $k$ by 1
  and return to {\bf Step 1}.
  \item [Step 3] Return with the actions attaining the minimum in {\bf Step 1}.
\end{description}
}
\end{quotation}
\renewcommand{\baselinestretch}{2.0}

Now we will introduce one more variant of the accelerated value iteration
algorithm. As we mentioned earlier, the functions $h_{\lm} (i)$ are concave, monotonically decreasing,  piecewise linear function of $\lm$, and $ h_{\lm} (n)= 0$ if and only if $\lm=\lm^*$. This means that $\lm ^*$ can be found using the one-dimensional search
procedure. This procedure may be computationally intractable, because it
requires the solution of several associated expected total cost
problems. Using the acceleration technique improves this approach
significantly. A variant of such search procedure can
be obtained from GAVI 1, where the parameter $\lm^{k}$ is updated as
$\lm^{k}=({\lm^k}_{\min}+{\lm^k}_{\max})/2$, where  lower and upper
bounds for $\lm^*$ are obtained at k-th iteration. Here some
explanations are required. We can take ${\lm^0}_{\min}={\lm}_{\min}$
and ${\lm^0}_{\max}={\lm}_{\max}$ defined in Remark \ref{r:bounds1} or
derived at the end of the section \ref{sec:reduction}. For $\lm^{0}=({\lm^0}_{\min}+{\lm^0}_{\max})/2$ find
$h_{\lm^0}$. If $h_{\lm^0}(n) \le 0$, then
${\lm^1}_{\min}={\lm^0}_{\min}$ and ${\lm^1}_{\max}={\lm}^1$,
otherwise ${\lm^1}_{\min}=\lm^1$ and ${\lm^1}_{\max}={\lm^0}_{\max}$
etc. A formal description of this variant of VI is given below:

\renewcommand{\baselinestretch}{1.5}
\noindent{\bf General Accelerated Value Iteration Algorithm 3 (GAVI
3)} \vspace{-0.4in}
\begin{quotation}
\texttt{
\begin{description}
  \item [Step 0] Select ${\lm^0}_{\min}={\lm}_{\min}$
and ${\lm^0}_{\max}={\lm}_{\max}$, $\wt{h}^0=h^0 \in
H_{{\lm^0}_{\max}}$, set $k=0$,  and specify $\eps
>0$.
  \item [Step 1] Select $\lm^{k+1}=({\lm^k}_{\min}+{\lm^k}_{\max})/2$, .
  \item [Step 2] Compute $h_{\lm^{k}}(i)= \lim_{m\rightarrow\infty}Z_k T_{\lm^k}\wt{h}^{m}(i)$ for all $i\in{I}$.
  \item [Step 3] If $\norm{h_{\lm^{k}}(n)}<\eps$, go to {\bf Step 4}. Otherwise if $h_{\lm^k}(n) < 0$, then
${\lm^{k+1}}_{\min}={\lm^k}_{\min}$ and
${\lm^{k+1}}_{\max}={\lm}^k$, otherwise ${\lm^{k+1}}_{\min}=\lm^k$
and ${\lm^{k+1}}_{\max}={\lm^k}_{\max}$,  increase $k$ by 1
  and return to {\bf Step 1}.
  \item [Step 4] Return with the actions attaining the minimum in {\bf Step 1}.
\end{description}
}
\end{quotation}
\renewcommand{\baselinestretch}{2.0}

Now we will propose an acceleration operator satisfying two
conditions {\bf (A)} and {\bf (B)}  with significant reduction in the number of iterations before convergence and little additional
computation in each iteration, so that the overall performance is greatly improved. Now we propose an acceleration operator that requires little additional computation per iteration but reduces the number
of iterations significantly.

\bigskip

\noindent {\bf Projective Operator}

\noindent For $h^k \in V_{\lm^k}$, $Z_k:h^k \rightarrow h^k+\al^* e$,
where $\al^*$ is the optimal solution of the following trivial 1-dimensional optimization problem:
\begin{equation} \label{eq:1DLP}
\max \left\{\sum h(i) + n \alpha   \mid T_{\lm^{k+1}} (h^k+\alpha e) \ge
h^k + \alpha e\right\}.
\end{equation}

\vspace{1in}
\begin{center}
Figure~\ref{f:PAVI} goes here.
\end{center}
\vspace{1in}

Having a single decision variable in the above optimization problem, it is straightforward to
find the optimal solution. The role of Projective Operator is
graphically illustrated in Figure~\ref{f:PAVI}, where $Z_k$ projects
the given point $h_k \in H_k$ to the boundary of $H_{k+1}$.
\begin{thm} \label{l:Pu_satisfy12}
For any index $k$ Projective Operator $Z_k$ satisfies the
conditions {\bf (A)} and {\bf (B)}.
\end{thm}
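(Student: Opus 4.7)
The plan is to verify conditions (A) and (B) by direct analysis of the one-dimensional linear program defining $\al^*$, exploiting the fact that membership in $H_{\lm^k}$ is equivalent to the monotonicity inequality $T_{\lm^k}h\ge h$ (this follows immediately by taking $\min_{a\in A(i)}$ in the defining constraints of $H_{\lm^k}$).

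First I would establish that $\al^*$ is well-defined. Feasibility of the LP (\ref{eq:1DLP}) is immediate: since $h\in H_{\lm^k}$, the choice $\alpha=0$ yields $T_{\lm^k}(h+0\cdot e)=T_{\lm^k}h\ge h$, so $\alpha=0$ lies in the feasible region. For boundedness from above, I would write the constraint coordinate-wise as
\begin{equation*}
    D(i,a)\;\ge\;\alpha\bigl(1-\textstyle\sum_{j=1}^{n-1}p_{ij}(a)\bigr)\qquad\forall i,\forall a\in A(i),
\end{equation*}
where $D(i,a)=r(i,a)-\lm^k+\sum_{j=1}^{n-1}p_{ij}(a)h(j)-h(i)\ge 0$ is the slack. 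Because the SSP is constructed with $p'_{in}=0$ and $p'_{it}=p_{in}$, one has $1-\sum_{j=1}^{n-1}p_{ij}(a)=p_{in}(a)$, and the unichain/recurrent-state hypothesis guarantees at least one pair $(i,a)$ with $p_{in}(a)>0$. This pair supplies the bound $\al^*\le D(i,a)/p_{in}(a)<\infty$, so the LP has a finite maximizer.

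Second, condition (A) follows immediately: $\al^*$ is feasible, hence $T_{\lm^k}(h+\al^* e)\ge h+\al^* e$, which is precisely the statement that $Z_k h=h+\al^* e\in H_{\lm^k}$. Condition (B) is also direct: since $\sum_i h(i)$ is constant in the LP objective, maximizing $\sum_i h(i)+n\alpha$ is equivalent to maximizing $\alpha$; as $\alpha=0$ is feasible, $\al^*\ge 0$, and therefore $Z_k h=h+\al^* e\ge h$ componentwise (I read the $>$ in condition (B) as a typo for $\ge$, by analogy with the discounted case, since at $h=h_{\lm^k}$ one may have $D(i,a^*)=0$ at the optimal action, forcing $\al^*=0$).

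The main obstacle I expect is the bookkeeping around the truncated sum $\sum_{j=1}^{n-1}$. One must be careful that the coefficient $(1-\sum_{j=1}^{n-1}p_{ij}(a))$ is not identically zero, which is exactly where the recurrence assumption on state $n$ enters and distinguishes the average-reward case from a naive one-step contraction argument. Once that is handled, both conditions reduce to one-line consequences of the LP's feasibility and the positivity of the optimum.
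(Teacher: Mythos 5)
Your argument is correct and follows essentially the same route as the paper's own proof: condition (A) is read off from the feasibility of $\al^*$ in the linear program (\ref{eq:1DLP}), and condition (B) from the feasibility of $\al=0$, which holds because $h\in H_{\lm^k}$. You actually go beyond the paper in two respects. First, you check that the LP has a finite maximizer; the paper never addresses boundedness, and your observation that the coefficient of $\al$ in the $(i,a)$-th constraint equals $p_{in}(a)$, combined with the recurrence of state $n$, is precisely what rules out an unbounded objective (the paper only gestures at this in its complexity section, where it evaluates ratios for pairs with $p_{it}(a)\neq 0$). Second, on condition (B) you are more careful: the paper asserts $\al^*\ge 1$ by citing Lemma \ref{lem:invariance}, but that inference does not follow ($T_{\lm^k}h\in H_{\lm^k}$ does not make $\al=1$ feasible), so your honest conclusion $\al^*\ge 0$, with the strict inequality in (B) read as $\ge$, is the defensible version. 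The only caveat is that (\ref{eq:1DLP}) is written with $T_{\lm^{k+1}}$ rather than $T_{\lm^k}$, so feasibility of $\al^*$ literally yields $Z_kh\in H_{\lm^{k+1}}$, and the intended source of strictness in (B) is presumably Lemma \ref{lem:invariance1} (namely $H_{\lm^k}\subset int(H_{\lm^{k+1}})$ when $\lm^{k+1}<\lm^k$); but since the paper states and proves (A) as invariance of $H_{\lm^k}$ itself, your reading is at least as consistent with the text as the original.
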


We will call GAVI 1, GAVI 2, and GAVI 3 with Projective Operator as Projective Accelerated
Value Iteration 1, 2, and 3, or PAVI 1, PAVI 2,and PAVI 3 for short in the paper.

Now we present another acceleration operator that satisfies {\bf
Acceleration Conditions} {\bf (A)} and {\bf (B)}.

\bigskip

\noindent {\bf Linear Extension Operator}

\noindent For $h^k\in H$, $Z_k:v\rightarrow h^k+\al^*(h^k-h^{k-1})$,
where $h^k=T_{\lm^k}h^{k-1}$ and $\al^*$ is the optimal solution to
the following linear program:
\begin{equation} \label{eq:1DLP2}
\max\left\{\sum h^k_i + \alpha\sum (h^k_i-h^{k-1}_i) \mid
T(h^k+\alpha (h^k-h^{k-1})) \ge h^k+\alpha (h^k-h^{k-1})\right\}.
\end{equation}

Figure \ref{f:LAVI} graphically illustrates how Linear Extension
Operator works. It casts $T_{\lm^k}h^k$ in the direction of
$Th^k-h^k$ to the boundary of the set $H_{\lm^{k+1}}$. Since $h^k \in
H_{\lm^k}$, we have $T_{\lm^k}h^k \ge h^k$, which is an improving
direction. As a result, Linear Extension Operator moves $Th^k$
closer to the point $h^*$.

\vspace{1in}
\begin{center}
Figure~\ref{f:LAVI} goes here.
\end{center}
\vspace{1in}

\begin{thm}\label{thm:L_w_satisfy12}
Linear Extension Operator $G$ satisfies the conditions {\bf (A)} and {\bf (B)}.
\end{thm}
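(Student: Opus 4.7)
The proof follows the blueprint of the Projective Operator argument (Theorem \ref{l:Pu_satisfy12}), with the constant shift direction $e$ replaced by the extrapolation direction $h^k - h^{k-1}$. The plan is to first establish that the one-dimensional LP in (\ref{eq:1DLP2}) is feasible and attains a finite optimum $\al^* \ge 0$; conditions {\bf (A)} and {\bf (B)} then fall out of its constraint and objective, respectively.

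For feasibility at $\al = 0$, I would use that $h^{k-1} \in H_{\lm^{k-1}}$ is maintained as an invariant of the iteration and that $\lm^k \le \lm^{k-1}$ for the variants in which this operator is meant to be used (GAVI 1 and GAVI 3, where $\{\lm^k\}$ is monotone by construction). Lemma \ref{lem:invariance1} then gives $H_{\lm^{k-1}} \subset H_{\lm^k}$, so $h^{k-1} \in H_{\lm^k}$, and the invariance in Lemma \ref{lem:invariance} yields $h^k = T_{\lm^k} h^{k-1} \in H_{\lm^k}$, i.e.\ $T_{\lm^k} h^k \ge h^k$; thus $\al = 0$ is feasible. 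The inequality $h^{k-1} \le T_{\lm^k} h^{k-1} = h^k$ also shows that the extrapolation direction satisfies $h^k - h^{k-1} \ge 0$.

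With these two inequalities in hand, condition {\bf (A)} is immediate: the LP constraint at $\al = \al^*$ reads
\begin{equation*}
 T_{\lm^k}\bigl(h^k + \al^*(h^k - h^{k-1})\bigr) \ge h^k + \al^*(h^k - h^{k-1}),
\end{equation*}
which is exactly the defining inequality for membership in $H_{\lm^k}$, so $Z_k h^k \in H_{\lm^k}$. For {\bf (B)}, observe that the coefficient of $\al$ in the LP objective equals $\sum_i (h^k_i - h^{k-1}_i) \ge 0$, hence the maximizer satisfies $\al^* \ge 0$; combined with $h^k - h^{k-1} \ge 0$ this yields $Z_k h^k = h^k + \al^*(h^k - h^{k-1}) \ge h^k$.

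The main technical obstacle is the chain establishing $h^k \ge h^{k-1}$: unlike the discounted case, it relies on $\{\lm^k\}$ being monotone decreasing, which is guaranteed only by the stepsize restriction in Remark \ref{r:step-size} together with the initialization $h_{\lm^0}(n) < 0$ built into Step 0 of the GAVI algorithms. A secondary point to verify is boundedness of the LP, i.e.\ $\al^* < \infty$; in the non-degenerate case $h^k \ne h^{k-1}$, the concave piecewise-linear slack $T_{\lm^k}(h^k + \al(h^k - h^{k-1})) - (h^k + \al(h^k - h^{k-1}))$ eventually becomes strictly negative in some coordinate, which forces a finite maximizer and completes the argument.
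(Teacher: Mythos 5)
Your proof is correct and follows essentially the same route as the paper's: feasibility of the one-dimensional LP gives condition \textbf{(A)} directly from its constraint, and nonnegativity of the extrapolation direction $h^k-h^{k-1}$ together with $\al^*\ge 0$ gives condition \textbf{(B)}. You are in fact somewhat more careful than the paper on two points it glosses over --- justifying $h^{k-1}\in H_{\lm^k}$ via the monotonicity of $\lm^k$ and Lemma \ref{lem:invariance1}, and the boundedness of the LP --- whereas the paper simply starts from $h\in H_{\lm^k}$ and uses Lemma \ref{lem:invariance} to obtain feasibility of $\al=1$ (the same fact as your $\al=0$ feasibility, up to reparametrization of the base point).
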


When Linear Extension Operator $G$ is used in place of $Z$ in {\bf Step 1} of GAVI, we
call the algorithm Linear Extension Accelerated Value Iteration or LAVI for short in
the paper.

 Interesting aspect of the proposed approach is that it can be used in Gauss-Seidel variant of value iteration algorithms.

\textbf {Gauss-Seidel:} $h^{k+1}=T_{GS_{\lm^k}}{h^k}$ where
\begin{equation}
  h^{k+1}(i)=\min_{a\in{A(i)}}\left\{r(i,a)-\lm^k+\sum_{j<i} p_{ij}(a)
  h^{k+1}(j)+ \sum_{j \geq i}^{n-1} p_{ij}(a) h^k(j) \right\}, \; \forall i\in \overline{1,n}. \label{eq:TGS}
\end{equation}

We start with the following definition of set:
\begin{align*}
H_{GS_{\lm^k}}=\{h\in\R^{n}\mid h\le{}T_{GS_{\lm^k}}{h}\}.
\end{align*}

The following lemma is an analogue of Lemma~\ref{lem:invariance}:
\begin{lem}
  \label{lem:invariance of splittings}\
  $H_{GS_{\lm^k}}$ is invariant under $T_{GS_{\lm^k}}$.
\end{lem}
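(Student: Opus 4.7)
The plan is to reduce invariance to the monotonicity of $T_{GS_{\lm^k}}$ and then prove that monotonicity by induction on the state index, exploiting the triangular structure of the Gauss--Seidel update.

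First I would observe that it suffices to show $T_{GS_{\lm^k}}$ is monotone, i.e.\ $h\le h'$ implies $T_{GS_{\lm^k}}h \le T_{GS_{\lm^k}}h'$. Indeed, if $h\in H_{GS_{\lm^k}}$, then by definition $h\le T_{GS_{\lm^k}}h$, so applying monotonicity with $h':=T_{GS_{\lm^k}}h$ yields $T_{GS_{\lm^k}}h \le T_{GS_{\lm^k}}(T_{GS_{\lm^k}}h)$, which is precisely the statement that $T_{GS_{\lm^k}}h \in H_{GS_{\lm^k}}$. So the remainder of the argument focuses on monotonicity.

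To establish monotonicity, I would proceed by induction on $i=1,2,\dots,n$, showing that $(T_{GS_{\lm^k}}h)(i) \le (T_{GS_{\lm^k}}h')(i)$ whenever $h\le h'$. The base case $i=1$ is immediate because the sum $\sum_{j<1}$ in (\ref{eq:TGS}) is empty, so $(T_{GS_{\lm^k}}h)(1)$ depends only on the non-negatively weighted entries $h(j)$ for $j\ge 1$, and the minimum over a common action set of pointwise-smaller affine expressions is pointwise smaller. For the inductive step, assume $(T_{GS_{\lm^k}}h)(j) \le (T_{GS_{\lm^k}}h')(j)$ for all $j<i$. Since the transition probabilities $p_{ij}(a)$ are non-negative, replacing each $(T_{GS_{\lm^k}}h)(j)$ by $(T_{GS_{\lm^k}}h')(j)$ for $j<i$, and each $h(j)$ by $h'(j)$ for $j\ge i$, can only increase the expression inside the braces in (\ref{eq:TGS}) uniformly in $a$; taking the minimum over $a\in A(i)$ preserves this inequality, giving $(T_{GS_{\lm^k}}h)(i) \le (T_{GS_{\lm^k}}h')(i)$.

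The main obstacle, although minor, is the implicit (recursive) nature of the Gauss--Seidel operator: $(T_{GS_{\lm^k}}h)(i)$ depends on the already-updated components $(T_{GS_{\lm^k}}h)(j)$ for $j<i$, not on $h(j)$ itself. This is precisely what forces the argument to be by induction on $i$ rather than a one-line comparison, but the triangular dependency is exactly what makes the induction go through. Once monotonicity is established, the invariance $T_{GS_{\lm^k}}H_{GS_{\lm^k}}\subset H_{GS_{\lm^k}}$ follows by the one-line argument of the first paragraph, in direct parallel to the Jacobi-style result of Lemma~\ref{lem:invariance}.
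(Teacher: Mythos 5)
Your proposal is correct and follows essentially the same route as the paper: the paper proves this lemma by the same two-step argument it uses for Lemma~\ref{lem:invariance} (from $h\le T_{GS_{\lm^k}}h$ and monotonicity conclude $T_{GS_{\lm^k}}h\le T_{GS_{\lm^k}}(T_{GS_{\lm^k}}h)$), with the monotonicity of $T_{GS_{\lm^k}}$ established separately in Lemma~\ref{l:monotonicity} by exactly the induction on the state index that you carry out. You have merely inlined that monotonicity argument, so there is no substantive difference.
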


With Lemma \ref{lem:invariance of splittings} acceleration operators
satisfying conditions {\bf (A)} and {\bf (B)} can be used in {\bf
Step 1} of GAVI with the variants $T_{GS_{\lm^k}}$ of the standard
operator $T_{\lm^k}$. However, it is not trivial to define
$H_{GS_{\lm^k}}$ with a set of linear inequalities and the
acceleration operators proposed in this research will not work.
To avoid the problem, we restrict the acceleration operators to a
strict subset of $H_{GS_{\lm^k}}$ .

\begin{lem} \label{l:relationships_between_V's}
The following relation holds
\begin{equation}
 H_{\lm^k}\subset{H_{GS_{\lm^k}}}.\label{eq:3}
\end{equation}
\end{lem}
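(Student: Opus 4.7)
The plan is a straightforward induction on coordinates, exploiting the fact that transition probabilities are nonnegative. Fix $h \in H_{\lm^k}$. By minimizing over $a$ in the defining inequalities of $H_{\lm^k}$, we immediately get the reformulation
$$H_{\lm^k} = \{h \in \R^n \mid h(i) \le T_{\lm^k} h(i), \ \forall i\},$$
which puts it on the same footing as $H_{GS_{\lm^k}}$. So the goal reduces to showing that $h \le T_{\lm^k} h$ implies $h \le T_{GS_{\lm^k}} h$.

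Write $\wt h = T_{GS_{\lm^k}} h$, computed coordinate by coordinate as in (\ref{eq:TGS}). First I would handle $i = 1$: the sum $\sum_{j<1}$ is empty, so $\wt h(1) = T_{\lm^k} h(1) \ge h(1)$ by hypothesis. Then I would proceed by induction on $i$: assuming $\wt h(j) \ge h(j)$ for all $j < i$, compare term by term
$$\wt h(i) = \min_{a \in A(i)} \Bigl\{ r(i,a) - \lm^k + \sum_{j<i} p_{ij}(a) \wt h(j) + \sum_{j=i}^{n-1} p_{ij}(a) h(j) \Bigr\} \ge \min_{a \in A(i)} \Bigl\{ r(i,a) - \lm^k + \sum_{j=1}^{n-1} p_{ij}(a) h(j) \Bigr\} = T_{\lm^k} h(i),$$
where the inequality uses $p_{ij}(a) \ge 0$ together with the inductive hypothesis $\wt h(j) \ge h(j)$ for $j < i$. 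Since $h \in H_{\lm^k}$ gives $T_{\lm^k} h(i) \ge h(i)$, we conclude $\wt h(i) \ge h(i)$, closing the induction.

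Thus $h \le T_{GS_{\lm^k}} h$, i.e. $h \in H_{GS_{\lm^k}}$. There is no real obstacle: the only structural facts used are nonnegativity of the $p_{ij}(a)$ and the monotonicity of $\min$ with respect to its arguments, both of which are immediate. The inclusion is typically strict because the Gauss-Seidel inequality $h \le T_{GS_{\lm^k}} h$ only constrains the new components through already-updated ones, but strictness is not claimed in the statement and need not be established.
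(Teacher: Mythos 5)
Your proof is correct and follows essentially the same route as the paper's: a coordinate-by-coordinate induction showing $T_{GS_{\lm^k}}h \ge T_{\lm^k}h \ge h$, using nonnegativity of the $p_{ij}(a)$ and monotonicity of the minimum, with the base case coming from the empty sum at $i=1$. The only cosmetic difference is that the paper carries the stronger inductive hypothesis $T_{GS_{\lm^k}}h(j)\ge T_{\lm^k}h(j)$ for $j<i$ while you carry $T_{GS_{\lm^k}}h(j)\ge h(j)$; both close the induction identically.
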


\begin{rem}
Gauss-Seidel methods require special consideration, since in general
$H_{\lm^k}\ne{}H_{GS_{\lm^k}}$ ~\cite{shlakhter:2005}.
\end{rem}

\begin{lem} \label{l:TX(VX)subsetV}
\begin{equation}
    T_{GS_{\lm^k}}( H_{GS_{\lm^k}}) \subset H_{\lm^k}.
\end{equation}
\end{lem}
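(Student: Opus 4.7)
The plan is to unpack both definitions and combine them using the defining inequality $h\le T_{GS_{\lm^k}}h$ that every element of $H_{GS_{\lm^k}}$ satisfies. Let $h\in H_{GS_{\lm^k}}$ be arbitrary and set $\tilde h := T_{GS_{\lm^k}} h$. The goal is to verify that $\tilde h\in H_{\lm^k}$, i.e.\ that for every state $i$ and every $a\in A(i)$,
\begin{equation*}
\tilde h(i) \le r(i,a)-\lm^k + \sum_{j=1}^{n-1} p_{ij}(a)\,\tilde h(j).
\end{equation*}

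First, from the Gauss--Seidel definition~(\ref{eq:TGS}), for every $a\in A(i)$ we immediately get the one-sided bound
\begin{equation*}
\tilde h(i) \le r(i,a)-\lm^k + \sum_{j<i} p_{ij}(a)\,\tilde h(j) + \sum_{j\ge i}^{n-1} p_{ij}(a)\,h(j),
\end{equation*}
because the minimum over $a$ is no greater than the value at any particular $a$. Next, since $h\in H_{GS_{\lm^k}}$, by definition $h\le T_{GS_{\lm^k}}h = \tilde h$, i.e.\ $h(j)\le \tilde h(j)$ componentwise. Multiplying by the non-negative coefficients $p_{ij}(a)$ and summing yields $\sum_{j\ge i}^{n-1}p_{ij}(a)\,h(j)\le\sum_{j\ge i}^{n-1}p_{ij}(a)\,\tilde h(j)$.

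Substituting this into the bound above gives
\begin{equation*}
\tilde h(i) \le r(i,a)-\lm^k + \sum_{j<i} p_{ij}(a)\,\tilde h(j) + \sum_{j\ge i}^{n-1} p_{ij}(a)\,\tilde h(j) = r(i,a)-\lm^k + \sum_{j=1}^{n-1} p_{ij}(a)\,\tilde h(j),
\end{equation*}
which is exactly the linear inequality defining membership in $H_{\lm^k}$. Since this holds for every $i$ and every $a\in A(i)$, we conclude $\tilde h\in H_{\lm^k}$, completing the argument.

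There is essentially no obstacle here: the only substantive ingredient is the monotonicity that comes for free from the definition of $H_{GS_{\lm^k}}$, and the rest is bookkeeping to split and re-merge the sum at index $i$. The one point to be careful about is that the coefficients $p_{ij}(a)$ must be non-negative (they are probabilities, so this is automatic) in order to preserve the inequality when replacing $h(j)$ by $\tilde h(j)$ in the tail sum.
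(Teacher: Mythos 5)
Your proof is correct and follows essentially the same route as the paper's: bound the Gauss--Seidel minimum by the value at an arbitrary action $a$, then use $h\le T_{GS_{\lm^k}}h$ to replace $h(j)$ by $\tilde h(j)$ in the tail sum, yielding exactly the inequalities defining $H_{\lm^k}$. The only cosmetic difference is that you obtain $h\le\tilde h$ directly from the definition of $H_{GS_{\lm^k}}$, whereas the paper nominally cites its monotonicity lemma for the same fact; the substance is identical.
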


\begin{thm} \label{thm:H_invariance_under_TGS}
The set $H_{\lm^k}$ is invariant under $T_{GS_{\lm^k}}$. That is,
\[
  T_{GS_{\lm^k}}{H_{\lm^k}}\subset{}H_{\lm^k}.
\]
\end{thm}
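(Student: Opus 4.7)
The plan is to obtain this theorem as an immediate composition of the two preceding lemmas, so that essentially no new work is required at the level of this statement. The real content has already been isolated: Lemma~\ref{l:relationships_between_V's} asserts the containment $H_{\lm^k}\subset H_{GS_{\lm^k}}$, while Lemma~\ref{l:TX(VX)subsetV} asserts that $T_{GS_{\lm^k}}$ carries the larger set $H_{GS_{\lm^k}}$ into the smaller set $H_{\lm^k}$. Chaining these inclusions forces $T_{GS_{\lm^k}}$ to map $H_{\lm^k}$ back into itself.

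Concretely, I would fix an arbitrary $h\in H_{\lm^k}$ and argue in two steps. First I would invoke Lemma~\ref{l:relationships_between_V's} to upgrade membership in $H_{\lm^k}$ to membership in $H_{GS_{\lm^k}}$, so that $h$ becomes an admissible input for the statement of Lemma~\ref{l:TX(VX)subsetV}. Second I would apply Lemma~\ref{l:TX(VX)subsetV} to conclude that $T_{GS_{\lm^k}} h\in H_{\lm^k}$. Since $h\in H_{\lm^k}$ was arbitrary, this establishes $T_{GS_{\lm^k}} H_{\lm^k}\subset H_{\lm^k}$, which is the claim.

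There is really no obstacle at this stage: the theorem is a bookkeeping consequence of the two lemmas, and I would not expect to need any further structural information about $T_{GS_{\lm^k}}$, the Gauss--Seidel update, or the defining inequalities of $H_{\lm^k}$. The substantive technical effort was spent earlier in establishing that (i) the componentwise Gauss--Seidel update is at least as large as the Jacobi-style update that defines $H_{\lm^k}$ (yielding Lemma~\ref{l:relationships_between_V's}) and (ii) one sweep of $T_{GS_{\lm^k}}$ enforces the full system of linear inequalities cutting out $H_{\lm^k}$ (yielding Lemma~\ref{l:TX(VX)subsetV}). The present theorem just records their joint consequence, which is exactly the invariance property needed so that the acceleration operators $Z_k$ can be composed with the Gauss--Seidel operator in Step~1 of GAVI without leaving $H_{\lm^k}$.
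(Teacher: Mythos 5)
Your proposal is correct and coincides with the paper's own proof, which likewise obtains the theorem by chaining Lemma~\ref{l:relationships_between_V's} and Lemma~\ref{l:TX(VX)subsetV} via $T_{GS_{\lm^k}}H_{\lm^k}\subset T_{GS_{\lm^k}}H_{GS_{\lm^k}}\subset H_{\lm^k}$. No differences worth noting.
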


Theorem \ref{thm:H_invariance_under_TGS} states that $H_{\lm^k}$ is
invariant under $T_{GS_{\lm^k}}$, which suggests that this
operator can replace $T_{\lm^k}$ in GAVI to give rise to new
accelerated value iteration algorithms. Therefore, we obtain several
accelerated versions of the value iteration algorithm, which are conveniently
written in the form XAYN, where `X' is either ``P'' for ``Projective'' or ``L'' for ``Linear
Extension'', `A' is for ``Accelerated'', `Y' is either ``VI'' for VI, or ``GS'' for GS, and 'N' is for one of ``1'', ``2'', and ``3'' for ``GAVI1'', ``GAVI2'', and ``GAVI3''. For example,
LAGS1 denotes Linear Extension Accelerated Gauss-Seidel value iteration method of type 1 (as for GAVI1) with $w^{n+1}=ZT_{GS}$
for {\bf Step 1}. Non-accelerated versions will be shortened to VI, and GS
without prefixes.

\section{Numerical Studies}
\label{sec:numerical-studies}

In this section we present numerical studies to demonstrate the computational
improvement that the proposed variants of two-phase accelerated value iteration algorithms achieve. The results are compared with Bertsecas' approach. We will consider four families of
randomly generated MDP problems. In all cases the number of actions in each state, the immediate
rewards for each state, and actions were generated using a uniform random number
generator. In all examples, we first fixed the number of non-zero entries in each row, so that the
density of non-zero entries in that row is equal to a given density level. We
randomly generated non-zero entries according to a uniform distribution over (0,1),
normalized these non-zero entries so that they add up to 1, and then placed them
randomly across the row.

\begin{exl} \label{ex:dense1}
Consider MDPs with 50 states and up to 50 actions per state. The transition
probability matrices were generated using a uniform random number generator and
non-zero elements were uniformly placed in the matrix. The density of non-zero
elements of the matrices varies from 30\% to 90\%. For two-phase variants of VI (columns of 3-6) the corresponding discounted MDP with discount factor $\alpha =0.99$ is solved.
\end{exl}

\begin{exl} \label{ex:dense2}
Consider MDPs with 100 states and up to 20 actions per state. The transition
probability matrices were generated using a uniform random number generator and
non-zero elements were uniformly placed in the matrix. The density of non-zero
elements of the matrices varies from 60\% to 90\%. For two-phase variants of VI (columns of 3-6) the corresponding discounted MDP with discount factor $\alpha =0.99$ is solved.
\end{exl}

\begin{exl} \label{ex:dense3}
Consider MDPs with 80 states and up to 40 actions per state. The transition
probability matrices were generated using a uniform random number generator and
non-zero elements were uniformly placed in the matrix. The density of non-zero
elements of the matrices varies from 40\% to 90\%. For two-phase variants of VI (columns of 3-6) the corresponding discounted MDP with discount factor $\alpha =0.99$ is solved.
\end{exl}

\begin{exl} \label{ex:dense4}
Consider MDPs with 200 states and up to 30 actions per state. The transition
probability matrices were generated using a uniform random number generator and
non-zero elements were uniformly placed in the matrix. The density of non-zero
elements of the matrices varies from 50\% to 90\%. For two-phase variants of VI (columns of 3-6) the corresponding discounted MDP with discount factor $\alpha =0.99$ is solved.
\end{exl}

As it was shown in \cite{shlakhter:2005} that the combinations of Projective operators with standard value iteration and Linear Extension operators Gauss-Seidel variant of value iteration give the best performance. The computational results of Examples 1-4 are presented in Table~\ref{tab:1} and Table~\ref{tab:2}.

\vspace{1in}
\begin{center}
Table~\ref{tab:1} goes here.
\end{center}
\vspace{1in}

Let us now present the brief analysis of the above numerical results.
Based on Examples 1 - 4, we can conclude that the proposed variants of accelerated value iteration algorithm  PAVI 1-PAVI 3 and LAGS 1-LAGS 3 show good performance and converge   up to 75 times faster than corresponding Bertsecas variants of value iteration algorithms.

For almost all of the cases in Table 1, PAVI 3  is the best algorithm. PAVI 1 gives almost the same good results, while PAVI2 performs relatively poorly. On the other hand, for most cases in Table 2, LAGS 2 is the best algorithm, while both LAGS 1 and LAGS 3 also show very strong performance. Summarizing these numerical results, we may conclude that both PAVI 3 and LAGS 3 show very good performance. Besides, application of these methods does not require choosing a stepsize. Similarly, both PAVI 1 and LAGS 1 show good performance, though additional computational efforts are necessarily to obtain a stepsize. We should also notice that the performance of both PAVI 2 and LAGS 2 may vary. This can be explained by the fact that it is not guaranteed that accelerating operators can be applied for all iterates. Because this fact, these algorithms are sensitive to the choice of a stepsize and  to the structure of transition probability matrix.

Let us also notice that the performance of two-phase algorithms depends on the choice of the discount factor of the corresponding discounted MDP problem which is solved during the first phase. We have the following tradeoff: having the discount closer to 1 leads to the increase in the number of iterations for the first phase. On the other hand, it make the bounds for the optimal average reward more tight, which leads to the reduction of the number of iterations of the second phase. The values of upper and lower bounds of the optimal average reward of Examples 1-4 are presented in Table~\ref{tab:2}

\vspace{1in}
\begin{center}
Table~\ref{tab:2} goes here.
\end{center}
\vspace{1in}

Based on Examples 1-4, we can conclude that the two-phase accelerated value iteration algorithms with second phase as the standard value iteration combined with PAVI 1, PAVI2, and PAVI 3 show better performance than the value iteration algorithm proposed by Bertsecas. For the best cases the accelerated value iteration algorithms converge up to 12 times faster than corresponding Bertsecas algorithm. We can also conclude that two-phase accelerated value iteration algorithms with second phase as the standard value iteration combined with PAVI 1 for most of the cases perform better than one combined with PAVI 1 and PAVI 3.

\section{Computational Complexity and Savings}
We now evaluate the number of the additional arithmetic operation required for application of the proposed accelerated operators. For the standard VI,
when the transition probability matrices are fully dense, each iteration will take
$C|S|^2$ (where $C$ is the average number of actions per state) multiplications and
divisions. With sparse transition probability matrices, this number can be estimated
as $NC|S|$ (where $N$ is the average number of nonzero entries per row of the
transition probability matrices).

The additional effort required in GAVI is due to the acceleration operator used in
{\bf Step 1} of GAVI. However, as it was discussed in \cite{shlakhter:2005} the acceleration step requires only $C|S|$ multiplication and division. This means that the iteration of GAVI requires $C|S||S+1|$ multiplication and division, which is
just slightly more than that of standard value iteration. Now we will show that for stochastic shortest path problems the following reduction of computational complexity is possible. With either Projective
Operator or Linear Extension Operator, a
trivial 1-dimensional LP should be solved per iteration of GAVI. Let us evaluate the
complexity of this step. Substitute the expression $h^k(i)+
\alpha$ into system of inequalities defining the set $H_{\lm^{k+1}}$. We
will have the following system of inequalities
$$
h^k(1)+\alpha-\sum_{j=1}^{n-1}p_{1j}(a)(h^k(j)+\alpha)\le{}r(1,a)-\lm^{k+1}
$$
$$
\vdots
$$
$$
h^k(n)+\alpha-\sum_{j=1}^{n-1}p_{nj}(a)(h^k(j)+\alpha)\le{}r(n,a)-\lm^{k+1}
$$

which can be written in the following form
$$
\alpha(1-\sum_{j=1}^{n-1}p_{1j}(a))
\le{}r(1,a)-\lm^{k+1}-(h^k(1)-\sum_{j=1}^{n-1}p_{1j}(a)h^k(j))
$$
$$
\vdots
$$
$$
\alpha(n-\sum_{j=1}^{n-1}p_{1j}(a))
\le{}r(n,a)-\lm^{k+1}-(h^k(n)-\sum_{j=1}^{n-1}p_{nj}(a)h^k(j))
$$

It is easy to notice that if $h^k \in H_k$ then the expressions in
righthand side of all inequalities are positive. It is also easy to
notice that for state $i$ and action $a \in A(i)$ such that
$p_{it}(a)=0$ we have lefthandside expressions equal to 0 and these
inequalities are satisfied for all value of $\alpha$. So, we have to
evaluate the ratios only for states and actions for which
$p_{it}(a)\neq 0$. For many real applications the number of such
states $K$  can be significantly less than $n$. So, the acceleration step requires only
$CK$ multiplications and divisions. Therefore, each iteration of GAVI may be just slightly more
expensive than the standard value iteration. In conclusion, the additional computation due to the accelerating operators is
marginal.

\section{Conclusions} \label{sec:conclusions}
Using the monotone behavior of the contraction mapping operator used in the
value iteration algorithm within the feasible set of the linear programming problem
equivalent to the discounted MDP and stochastic shortest path models, we propose a class of operators that can be used in combination with the standard contraction mapping and Gauss-Seidel methods to improve the computational efficiency. Two acceleration operators, Projective Operator and Linear Extension Operator, are particularly proposed and combined into the three variants of value iteration algorithms. The numerical studies show that the savings due to the acceleration have been essential and the maximum savings is up to 80 time faster than the case without our accelerating operator. It is especially interesting to mention that the savings become significant when the proposed variants of the value iteration algorithms for average reward MDPs are used with bounds obtained from solution of the corresponding discounted MDP problem.

\appendix

\section{Proofs} \label{sec:proofs}

\begin{lem}  \label{l:monotonicity}
\noindent \begin{enumerate}
  \renewcommand{\labelenumi}{(\roman{enumi})}
  \item  If $h\ge{}g$, then for any $\lm^k$ of GAVI
        $\;\; T_{\lm^k}h\ge{}T_{\lm^k}g$ and $T_{GS_{\lm^k}} h\ge{}T_{GS_{\lm^k}} g$.
  \item  If $h>{}g$, then for any $\lm^k$ of GAVI
  $\;\; T_{\lm^k}h>{}T_{\lm^k}g$ and $T_{GS_{\lm^k}} h>{}T_{GS_{\lm^k}} g$.
\end{enumerate}
\end{lem}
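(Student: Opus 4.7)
The plan is to reduce both parts to the elementary fact that pointwise order is preserved under nonnegative linear combinations and under taking a pointwise minimum. The only real subtlety is the Gauss-Seidel operator, whose value at state $i$ depends on its own value at states $j<i$, so that monotonicity must be propagated up the state index by induction.

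For part (i) applied to $T_{\lm^k}$, fix a state $i$ and let $a^{\ast}\in A(i)$ attain the minimum defining $T_{\lm^k}h(i)$. Because $p_{ij}(a^{\ast})\ge 0$ and $h\ge g$ componentwise,
\begin{equation*}
T_{\lm^k}h(i)=r(i,a^{\ast})-\lm^k+\sum_{j=1}^{n-1}p_{ij}(a^{\ast})h(j)\;\ge\; r(i,a^{\ast})-\lm^k+\sum_{j=1}^{n-1}p_{ij}(a^{\ast})g(j)\;\ge\; T_{\lm^k}g(i),
\end{equation*}
where the last inequality uses that $T_{\lm^k}g(i)$ is the minimum over $A(i)$. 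For $T_{GS_{\lm^k}}$ I would proceed by induction on the state index $i$. The base case $i=1$ has an empty sum $\sum_{j<1}$, so the argument is identical to the one above. For the inductive step, assuming $(T_{GS_{\lm^k}}h)(j)\ge(T_{GS_{\lm^k}}g)(j)$ for all $j<i$, the defining expression
\begin{equation*}
r(i,a)-\lm^k+\sum_{j<i}p_{ij}(a)(T_{GS_{\lm^k}}h)(j)+\sum_{j\ge i}^{n-1}p_{ij}(a)h(j)
\end{equation*}
dominates the corresponding quantity for $g$ term by term for every $a\in A(i)$, and taking the minimum over $a$ preserves that inequality.

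For part (ii) the same calculation goes through with strict inequalities, with one point to verify: in the middle step the strict bound requires that the minimizing action $a^{\ast}$ has at least one $j\in\{1,\dots,n-1\}$ with $p_{ij}(a^{\ast})>0$, which in the $\lm$-SSP formulation is equivalent to $p'_{it}(a^{\ast})<1$. I expect this to be the main, though minor, obstacle: the statement implicitly relies on the non-degeneracy that no state admits an action which terminates with probability one in a single step while simultaneously being selected by the minimization. Under the unichain and recurrence assumption carried throughout the paper, this is consistent with the surrounding setup, and the strict inequality $T_{\lm^k}h(i)>T_{\lm^k}g(i)$ follows since every $h(j)-g(j)>0$. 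For the Gauss-Seidel case the induction is unchanged: the base case $i=1$ produces a strict inequality by the same argument, and at each subsequent $i$ either the inherited strict inequalities on $(T_{GS_{\lm^k}}h)(j)-(T_{GS_{\lm^k}}g)(j)$ for $j<i$ or the strict inequalities on $h(j)-g(j)$ for $j\ge i$ force the defining expression for $h$ to strictly exceed that for $g$ under any action, hence after minimization.
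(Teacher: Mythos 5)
Your proof takes essentially the same route as the paper: termwise comparison under the minimum for $T_{\lm^k}$, followed by induction on the state index for $T_{GS_{\lm^k}}$. Your caveat about part (ii) is well taken --- the strict inequality does require $\sum_{j=1}^{n-1}p_{ij}(a^{\ast})>0$, i.e.\ that the relevant action does not send state $i$ to the terminal state with probability one, a non-degeneracy point the paper's proof (which merely says the inequalities may be replaced by strict ones) passes over without comment.
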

\begin{proof}[Proof of Lemma~\ref{l:monotonicity}]
Let us first prove that $g\ge{}h$ implies
$T_{\lm^k}g\ge{}T_{\lm^k}h$.
\[
T_{\lm^k}u(i)=\min_{a\in
A(i)}\left\{r(i,a)-\lm^k+\sum_{j}p_{ij}(a)g(j)\right\} \ge
\]
\[
\min_{a\in A(i)}\left\{r(i,a)-\lm^k+\sum_{j}p_{ij}(a)h(j)\right\} =
Th(i) \text{ for all } i\in \overline{1,N}.
\]
For part (ii) the inequality can be simply replaced with a strict
inequality. Let $f=\TGS g$ and $\xi=\TGS h$. Then $ f(1)=
(T_{GS_{\lm^k}} f)(1) = (T_{\lm^k} g)(1) \le (T_{\lm^k}h) (1) = \xi(1). $ By induction, assuming that $f(k) \le \xi(k)$
for all $k<i$, we get for
  $k=i$
  \begin{align*}
    f(i)&=\min_{a\in A(i)}\biggl(r(i,a)-\lm^k+\sum_{j<i}p_{ij}(a)f(j) + \sum_{j \ge i}p_{ij}(a)g(j) \biggr)\\
         &\le\min_{a\in A(i)}\biggl(r(i,a)-\lm^k+\sum_{j<i}p_{ij}(a)\xi(j) + \lm\sum_{j \ge i}p_{ij}(a)h(j) \biggr)=(T_{GS_{\lm^k}} h)(i)=\xi(i).
  \end{align*}

\end{proof}

\begin{proof}[Proof of Lemma~\ref{lem:bounds}]

Keeping in mind the additional property of optimal bias $P^* h = 0$, where $P^*$ is a positive matrix, and putting aside the trivial case $r(i,a)= 0$ for all $i$ and $a$, it is easy to see, that the optimal bias $h$ should have both positive and negative coordinates \cite{Lewis:2001}. Letting $\al$ approach to 1 in  (\ref{eq:bias}), we can make the term $f(\al)$ be arbitrarily small.
From this we can conclude that there exist two coordinates $i$ and $j$ that $v_{\alpha}(i)< \lm^* < v_{\alpha}(j)$ for all $\al$ sufficiently close to 1.
\end{proof}

\begin{proof}[Proof of Lemma~\ref{lem:invariance_disc}]
Proof of this lemma is identical to proof of lemma \ref{lem:invariance}, and can be found in the original paper \cite{shlakhter:2005}.
\end{proof}

\begin{proof}[Proof of Lemma~\ref{lem:invariance}]
Let $h \in H_{\lm^k}$ and $g=T_{\lm^k}h$. By definition of set
$H_{\lm^k}$, $g = T_{\lm^k}h \ge h$. By monotonicity shown in
Lemma~\ref{l:monotonicity}, $T_{\lm^k}g \ge T_{\lm^k}h=g$. Thus,
$g \in H_{\lm^k}$.
\end{proof}

\begin{proof}[Proof of Lemma~\ref{lem:invariance1}]
This lemma is a trivial corollary of a statement of Remark
\ref{r:monotonisity}. Let $h \in H_{\lm^k}$. By definition it
means that $$ h(i)-\sum_{j=1}^{N-1}p_{ij}(a)h(j)\le{}r(i,a)-\lm^{k},
\text{  for all  } i=\overline{1,N}.$$ From Remark
\ref{r:step-size} we have $\lm^k > \lm^{k+1}$, so $h$ satisfies
$$ h(i)-\sum_{j=1}^{N-1}p_{ij}(a)h(j) <{}r(i,a)-\lm^{k+1},
\text{  for all  } i=\overline{1,N},$$ which means that $h \in
int(H_{n+1}).$
\end{proof}

\begin{proof}[Proof of Lemma~\ref{l:convergence}]
   The proof of \textbf{(ii)} is a trivial application of monotonicity lemma \ref{l:monotonicity} and Condition \textbf{(B)} of the operator $Z_k$. \textbf{(i)} follows from Remark \ref{r:step-size}. \textbf{(iii)} As it was shown in \cite{bertsekas:2001,
bertsekas:1998} the Markovian operator $T_{\lm^k}$ is a contraction mapping with respect to a weighted $\max$-norm. Then it is easy to see (similar to the Remark \ref{r:step-size}) that the sequence $h^k$ and a sequence of fixed point of $\lm^k$-SSP $h_{\lm^k}$ both converge to $h^*$ with respect to this norm. From this and inequality \textbf{(ii)} we immediately obtain \textbf{(iii)}.
\end{proof}

\begin{proof}[Proof of Theorem~\ref{l:Pu_satisfy12}]
Condition (A) is satisfied trivially since $h +\al e\in
H_{\lm^{k}}$ for any $h\in H_{\lm^k}$ by the definition of $Z_k$
given in (\ref{eq:1DLP}). Now we have to show that $Z_k$ satisfies
condition (B). We know $\al=0$ is feasible to the linear program
(\ref{eq:1DLP}) since $h\in H_{\lm^k}$ (or $T_{\lm^k}h\ge h$). By
lemma \ref{lem:invariance} $T_{\lm^k}h \in H_{\lm^{k+1}}$, and we have $\al^* \ge 1$. Therefore, $Z_k h = h
+\al^* e  \ge h$.
\end{proof}

\begin{proof}[Proof of Theorem~\ref{thm:L_w_satisfy12}]
For $h\in H_{\lm^k}$, $Z_k h=h+\al^*(T_{\lm^k}h-h)$, where $\al^*$
is an optimal solution to the linear program in (\ref{eq:1DLP2}).
Since $h+\al^*(T_{\lm^k}h-h)$ is feasible to the linear program,
we have $T_{\lm^k}(h+\al^*(T_{\lm^k}h-h))\ge
h+\al^*(T_{\lm^k}h-h)$. Together with lemma ~\ref{lem:invariance}
this suffices Condition (A). By $T_{\lm^k}h \in H_{\lm^k}$, $\al=1$
is feasible. Since $T_{\lm^k}h\ge h$ and $\al=1$ is feasible,
$\al^* \ge 0$. Hence, Condition (B) is satisfied.
\end{proof}

\begin{proof}[Proof of Lemma~\ref{lem:invariance of splittings}]
The proof is similar to proofs of Lemma~\ref{lem:invariance}.
\end{proof}

\begin{proof} [Proof of Lemma~\ref{l:relationships_between_V's}]
In order to prove inclusion $H_{\lm^k} \subset H_{GS_{\lm^k}}$, it
is sufficient to show that if $h\le T_{\lm^k}h$, then $h\le
T_{GS_{\lm^k}}h$. For $h\in V$, let $g=T_{GS_{\lm^k}}{h}$ and
$f=T_{\lm^k}h$. Then, $f(j)\ge{}h(j)$ for all $j$ and $g(1)=f(1)$.
Assume that $g(k)\ge{}f(k)$ for all $k<i$, then
\begin{align*}
 g(i)&=\min_{a\in A(i)}\biggl(r(i,a)-\lm^k+\sum_{j<i}p_{ij}(a)g(j) +
      \sum_{j \ge i}p_{ij}(a)h(j) \biggr)\\
    &\ge\min_{a\in{}A(i)}\biggl(r(i,a)-\lm^k+\sum_{j<i}p_{ij}(a)f(j) +
      \sum_{j \ge i}p_{ij}(a)h(j) \biggr)\\
    &\ge\min_{a\in{}A(i)}\biggl(r(i,a)-\lm^k+\sum_{j<i}p_{ij}(a)h(j) +
      \sum_{j \ge i}p_{ij}(a)h(j) \biggr)=(T_{\lm^k}v)(i)=f(i).
\end{align*}
By induction, $f\le g$, implying $h\le T_{\lm^k}h=f\le g =
T_{GS_{\lm^k}}h$.
\end{proof}

\begin{proof}[Proof of Lemma~\ref{l:TX(VX)subsetV}]
For $h\in H_{GS_{\lm^k}}$, let $g= T_{GS_{\lm^k}}{h}$.  By
Lemma~\ref{l:monotonicity}, $h\le T_{GS_{\lm^k}}{}h=g$. By
replacing ``$\min_{a\in A(i)}$'' with inequalities, similar to the
argument used in the proof of
Lemma~\ref{l:relationships_between_V's}, and by $h\le g$, we have
\begin{align*}
 g(i) &\le{}r(i,a)-\lm^k+\sum_{j=1}^{i-1}p_{ij}(a)g(j)+\sum_{j=i}^{N-1}p_{ij}(a)h(j)
   \text{ for all $i\in{}\overline{2,N}$ for all $a\in{}A(i)$} \\
     &\le r(i,a)-\lm^k+\sum_{j=1}^{i-1}p_{ij}(a)g(j)+
  \sum_{j=i}^{N-1}p_{ij}(a)g(j) \text{ for all $i\in{}\overline{2,N}$ for all $a\in{}A(i)$,}
\end{align*}
which is equivalent to $g\le{}T_{\lm^k}g$, or $g\in{H_{\lm^k}}$.
\end{proof}

\begin{proof}[Proof of Theorem~\ref{thm:H_invariance_under_TGS}]
By Lemma~\ref{l:relationships_between_V's} and
Lemma~\ref{l:TX(VX)subsetV},
\[
T_{GS_{\lm^k}}H_{\lm^k} \subset T_{GS_{\lm^k}}H_{GS_{\lm^k}}
\subset H_{\lm^k}.
\]
\end{proof}


\begin{figure} [p]
  \begin{center}
  \setlength{\unitlength}{0.00053333in}
\begingroup\makeatletter\ifx\SetFigFont\undefined%
\gdef\SetFigFont#1#2#3#4#5{%
  \reset@font\fontsize{#1}{#2pt}%
  \fontfamily{#3}\fontseries{#4}\fontshape{#5}%
  \selectfont}%
\fi\endgroup%
{\renewcommand{\dashlinestretch}{30}
\begin{picture}(7524,6831)(0,-10)
\put(5037,2202){\blacken\ellipse{150}{150}}
\put(5037,2202){\ellipse{150}{150}}
\put(5262,1527){\blacken\ellipse{150}{150}}
\put(5262,1527){\ellipse{150}{150}}
\blacken\path(2817.000,6207.000)(2787.000,6327.000)(2757.000,6207.000)(2817.000,6207.000)
\path(2787,6327)(2787,702)
\thicklines
\path(1212,5877)(3087,5277)(3537,4977)
	(4512,3702)(5412,1077)
\thinlines
\dottedline{45}(4095,4887)(5820,12)
\dottedline{45}(2912,5859)(5312,2634)
\dottedline{45}(730,6804)(4180,4554)
\dottedline{45}(12,6253)(4137,4903)
\dottedline{45}(4098,4888)(5823,13)
\thicklines
\dashline{60.000}(5412,1077)(4062,5052)
\thinlines
\dottedline{45}(5262,927)(5262,5652)
\dottedline{45}(5037,1302)(5037,5802)
\thicklines
\path(5262,4377)(5262,4227)
\thinlines
\path(1212,4302)(7512,4302)
\blacken\path(7392.000,4272.000)(7512.000,4302.000)(7392.000,4332.000)(7392.000,4272.000)
\thicklines
\path(5037,4377)(5037,4227)
\path(4287,4377)(4287,4227)
\thinlines
\path(3076,4684)(3301,5059)
\blacken\path(3264.985,4940.666)(3301.000,5059.000)(3213.536,4971.536)(3264.985,4940.666)
\put(5487,1377){\makebox(0,0)[lb]{\smash{{{\SetFigFont{12}{14.4}{\rmdefault}{\mddefault}{\updefault}$h_{\lm^k}(n)$}}}}}
\put(5262,2127){\makebox(0,0)[lb]{\smash{{{\SetFigFont{12}{14.4}{\rmdefault}{\mddefault}{\updefault}$h_{\lm^{k+1}}(n)$}}}}}
\put(5337,4377){\makebox(0,0)[lb]{\smash{{{\SetFigFont{12}{14.4}{\rmdefault}{\mddefault}{\updefault}$\lm^k$}}}}}
\put(4812,4377){\makebox(0,0)[lb]{\smash{{{\SetFigFont{12}{14.4}{\rmdefault}{\mddefault}{\updefault}$\lm^{k+1}$}}}}}
\put(3762,4002){\makebox(0,0)[lb]{\smash{{{\SetFigFont{12}{14.4}{\rmdefault}{\mddefault}{\updefault}$\lm^*$}}}}}
\put(2862,4452){\makebox(0,0)[lb]{\smash{{{\SetFigFont{12}{14.4}{\rmdefault}{\mddefault}{\updefault}$h_{\lm}(n)$}}}}}
\put(4262,4377){\makebox(0,0)[lb]{\smash{{{\SetFigFont{12}{14.4}{\rmdefault}{\mddefault}{\updefault}$\lm^{k+2}$}}}}}
\end{picture}
}
  \end{center}
  \caption{$\lm$ update}
  \label{f:PAVI1}
\end{figure}

\bigskip

\begin{figure} [p]
  \begin{center}
  \setlength{\unitlength}{0.00053333in}
\begingroup\makeatletter\ifx\SetFigFont\undefined%
\gdef\SetFigFont#1#2#3#4#5{%
  \reset@font\fontsize{#1}{#2pt}%
  \fontfamily{#3}\fontseries{#4}\fontshape{#5}%
  \selectfont}%
\fi\endgroup%
{\renewcommand{\dashlinestretch}{30}
\begin{picture}(6924,6564)(0,-10)
\put(1212,237){\blacken\ellipse{150}{150}}
\put(1212,237){\ellipse{150}{150}}
\put(1287,912){\blacken\ellipse{150}{150}}
\put(1287,912){\ellipse{150}{150}}
\put(2787,2337){\blacken\ellipse{150}{150}}
\put(2787,2337){\ellipse{150}{150}}
\put(2787,2787){\blacken\ellipse{150}{150}}
\put(2787,2787){\ellipse{150}{150}}
\put(3387,3312){\blacken\ellipse{150}{150}}
\put(3387,3312){\ellipse{150}{150}}
\path(4287,1137)(4287,6537)
\blacken\path(4324.500,6417.000)(4287.000,6537.000)(4249.500,6417.000)(4324.500,6417.000)
\path(1212,4812)(6912,4812)
\blacken\path(6792.000,4774.500)(6912.000,4812.000)(6792.000,4849.500)(6792.000,4774.500)
\path(12,2562)(1662,3537)(3687,3837)(1512,12)
\thicklines
\dashline{60.000}(1212,162)(1287,912)(2787,2337)
	(2787,2787)(3387,3312)
\thinlines
\dottedline{60}(1638,1764)(4938,4689)
\dottedline{45}(344,52)(4019,3427)
\put(837,12){\makebox(0,0)[lb]{\smash{{{\SetFigFont{12}{14.4}{\rmdefault}{\mddefault}{\updefault}$\tilde{h}^{k}$}}}}}
\put(912,1062){\makebox(0,0)[lb]{\smash{{{\SetFigFont{12}{14.4}{\rmdefault}{\mddefault}{\updefault}$T\tilde{h}^{k}$}}}}}
\put(3012,2112){\makebox(0,0)[lb]{\smash{{{\SetFigFont{12}{14.4}{\rmdefault}{\mddefault}{\updefault}$\tilde{h}^{k+1}$}}}}}
\put(2487,2962){\makebox(0,0)[lb]{\smash{{{\SetFigFont{12}{14.4}{\rmdefault}{\mddefault}{\updefault}$T\tilde{h}^{k+1}$}}}}}
\put(387,2187){\makebox(0,0)[lb]{\smash{{{\SetFigFont{12}{14.4}{\rmdefault}{\mddefault}{\updefault}$H_{\lm^k}$}}}}}
\put(3612,3237){\makebox(0,0)[lb]{\smash{{{\SetFigFont{12}{14.4}{\rmdefault}{\mddefault}{\updefault}$\tilde{h}^{k+2}$}}}}}
\put(3762,3912){\makebox(0,0)[lb]{\smash{{{\SetFigFont{12}{14.4}{\rmdefault}{\mddefault}{\updefault}$h_{\lm^k}$}}}}}
\end{picture}
}
  \end{center}
  \caption{Projective AVI}
  \label{f:PAVI1}
\end{figure}

\bigskip

\begin{figure} [b]
\begin{center}
  \setlength{\unitlength}{0.00053333in}
\begingroup\makeatletter\ifx\SetFigFont\undefined%
\gdef\SetFigFont#1#2#3#4#5{%
  \reset@font\fontsize{#1}{#2pt}%
  \fontfamily{#3}\fontseries{#4}\fontshape{#5}%
  \selectfont}%
\fi\endgroup%
{\renewcommand{\dashlinestretch}{30}
\begin{picture}(6924,7014)(0,-10)
\put(1212,1437){\blacken\ellipse{150}{150}}
\put(1212,1437){\ellipse{150}{150}}
\put(1137,612){\blacken\ellipse{150}{150}}
\put(1137,612){\ellipse{150}{150}}
\put(1437,3837){\blacken\ellipse{150}{150}}
\put(1437,3837){\ellipse{150}{150}}
\put(2187,3837){\blacken\ellipse{150}{150}}
\put(2187,3837){\ellipse{150}{150}}
\put(3484,3890){\blacken\ellipse{150}{150}}
\put(3484,3890){\ellipse{150}{150}}
\path(4287,1587)(4287,6987)
\blacken\path(4324.500,6867.000)(4287.000,6987.000)(4249.500,6867.000)(4324.500,6867.000)
\path(1212,5262)(6912,5262)
\blacken\path(6792.000,5224.500)(6912.000,5262.000)(6792.000,5299.500)(6792.000,5224.500)
\path(12,3012)(1662,3987)(3687,4287)(1512,462)
\thicklines
\dashline{60.000}(1137,537)(1437,3837)(3462,3912)
\thinlines
\dottedline{45}(462,3762)(4062,3912)
\dottedline{60}(1062,12)(1512,4512)
\put(737,462){\makebox(0,0)[lb]{\smash{{{\SetFigFont{12}{14.4}{\rmdefault}{\mddefault}{\updefault}$\tilde{h}^{k}$}}}}}
\put(712,1512){\makebox(0,0)[lb]{\smash{{{\SetFigFont{12}{14.4}{\rmdefault}{\mddefault}{\updefault}$T\tilde{h}^{k}$}}}}}
\put(387,2637){\makebox(0,0)[lb]{\smash{{{\SetFigFont{12}{14.4}{\rmdefault}{\mddefault}{\updefault}$H_{\lm^k}$}}}}}
\put(3612,3687){\makebox(0,0)[lb]{\smash{{{\SetFigFont{12}{14.4}{\rmdefault}{\mddefault}{\updefault}$\tilde{h}^{k+2}$}}}}}
\put(3762,4362){\makebox(0,0)[lb]{\smash{{{\SetFigFont{12}{14.4}{\rmdefault}{\mddefault}{\updefault}$h_{\lm^k}$}}}}}
\put(1137,3987){\makebox(0,0)[lb]{\smash{{{\SetFigFont{12}{14.4}{\rmdefault}{\mddefault}{\updefault}$\tilde{h}^{k+1}$}}}}}
\put(2187,3462){\makebox(0,0)[lb]{\smash{{{\SetFigFont{12}{14.4}{\rmdefault}{\mddefault}{\updefault}$T\tilde{h}^{k+1}$}}}}}
\end{picture}
}  
\end{center}
\caption{Linear AVI} \label{f:LAVI}
\end{figure}

\begin{center}
\begin{table} [ht]
\fontsize{10}{8} \selectfont
\begin{tabular}{|c|c|c|c|c|c|c|c|} \hline
&{Density}$^\dag$&PAVI 1&PAVI 2 &PAVI 3&Bertsecas VI 1 $^\ddag$&Berstsecas VI 2$^\ddag$\\\hline
& 30 &69 &262 &\textbf{50} &546 & 1064 \\\cline{2-7}
& 40 & 99&204 &\textbf{54} & 421& 844 \\\cline{2-7}
& 50 & 81& 165&\textbf{35} & 321 & 675 \\\cline{2-7}
 \raisebox{1.5ex}[0cm][0cm]{Example 1} & 30& 62 &226 & \textbf{35}& 377& 728 \\\cline{2-7}
& 70& 108 &198 & \textbf{44}& 366& 720\\\cline{2-7}
& 80& 105 &260 & \textbf{42}& 422& 828 \\\cline{2-7}
& 90 & 102& 137& \textbf{32}& 327& 640 \\\hline
 & 60 &137 & 542& \textbf{44}& 760& 1525 \\\cline{2-7}
& 70 & 127& 604& \textbf{39}& 810& 1630\\\cline{2-7}
 \raisebox{1.9ex}[0cm][0cm]{Example 2} & 80& 128&448 & \textbf{40}&716 & 1504\\\cline{2-7}
& 90 &130 & 578&\textbf{40} &762 & 1546 \\\hline
 & 40 & 151& 358&\textbf{61} & 589 &  1145\\\cline{2-7}
& 50 &97 &354 & \textbf{47}&558 & 1139 \\\cline{2-7}
& 60 &116 &394 & \textbf{55} & 679& 1383 \\\cline{2-7}
 \raisebox{1.9ex}[0cm][0cm]{Example 3} & 70& 81 &354 & \textbf{38}& 555 & 1133 \\\cline{2-7}
& 80 &77 & 382& \textbf{32}& 553&  1111\\\cline{2-7}
& 90 & 82& 412& \textbf{36}& 551& 1153 \\\hline
 & 50 &118 &825 & \textbf{62}&1196 & 2730\\\cline{2-7}
& 60 &107 & 863& \textbf{48}& 1178 & 2591 \\\cline{2-7}
 \raisebox{0.0ex}[0cm][0cm]{Example 4} & 70& 119 & 990& \textbf{35}& 1358 & 3027\\\cline{2-7}
& 80 & 86& 947& \textbf{41}& 1319 & 2990 \\\cline{2-7}
& 90 & 97& 956& \textbf{39}& 1339 & 3058 \\\hline
\end{tabular}
\vspace{0.1in}

{\footnotesize $\dag$: The density of the transition probability matrix (\%).\\
$\ddag$: These two columns are the
number of iterations of the Bertsecas value iteration algorithm with improved bounds obtained from the phase 1 and without them.}

\vspace{-0.1in}
\caption{The number of iterations of the value iteration algorithms with and without an accelerating
operator applied to a family of MDPs from Examples~\ref{ex:dense1} -~\ref{ex:dense4}.} \label{tab:1}
\end{table}
\end{center}
\vspace{0.1in}

\begin{center}
\begin{table} [ht]
\fontsize{10}{8} \selectfont
\begin{tabular}{|c|c|c|c|c|c|c|c|} \hline
&{Density}$^\dag$&PAGS 1&PAGS 2 &PAGS 3&Bertsecas GS 1 $^\ddag$&Berstsecas GS 2$^\ddag$\\\hline
& 30 &69 & \textbf{62}& 63& 346 & 462 \\\cline{2-7}
& 40 & 99& \textbf{63}&71 & 283& 408\\\cline{2-7}
& 50 & 81& 63& \textbf{51}&  221& 333\\\cline{2-7}
 \raisebox{1.5ex}[0cm][0cm]{Example 1} & 30&62  & 51& \textbf{39}&240 & 391 \\\cline{2-7}
& 70& 108 &\textbf{64} &77 &230 & 344\\\cline{2-7}
& 80& 105 & \textbf{65}& 97& 253& 283 \\\cline{2-7}
& 90 & 102& \textbf{43}& 54& 198 & 302 \\\hline
 & 60 & 137&\textbf{43} &116 &383 & 570 \\\cline{2-7}
& 70 & 127&132 &\textbf{85} &402 & 605\\\cline{2-7}
 \raisebox{1.9ex}[0cm][0cm]{Example 2} & 80&128 &\textbf{87 }&106 &383 & 559\\\cline{2-7}
& 90 &130 & \textbf{62}& 103 & 383& 568 \\\hline
 & 40 &151 &\textbf{70} & 86 & 345 & 487 \\\cline{2-7}
& 50 & 97& \textbf{67}& 72 & 328 &  477\\\cline{2-7}
& 60 & \textbf{116}& 137& 125& 369 &  581\\\cline{2-7}
 \raisebox{1.9ex}[0cm][0cm]{Example 3} & 70& 81 &\textbf{41 }& 85&  324& 491 \\\cline{2-7}
& 80 &77 & \textbf{38}& 82& 302& 454 \\\cline{2-7}
& 90 &\textbf{82} & 84& 131& 314 & 470 \\\hline
 & 50 &118 & \textbf{46}&111 & 552& 787\\\cline{2-7}
& 60 &107 &140 & \textbf{97}&  545& 810 \\\cline{2-7}
 \raisebox{0.0ex}[0cm][0cm]{Example 4} & 70& 119 & \textbf{67}&94 & 584 & 807\\\cline{2-7}
& 80 &\textbf{86} & 114& 129&  572& 789 \\\cline{2-7}
& 90 &\textbf{97} & 98 &134 &  585&  837\\\hline
\end{tabular}
\vspace{0.1in}

{\footnotesize $\dag$: The density of the transition probability matrix (\%).\\
$\ddag$: These two columns are the
number of iterations of the Gauss-Seidel variant of Bertsecas algorithm  with  improved bounds obtained from the phase 1 and without them.}

\vspace{-0.1in}
\caption{The number of iterations of the value iteration algorithms with and without an accelerating
operator applied to a family of MDPs from Examples~\ref{ex:dense1} -~\ref{ex:dense4}.} \label{tab:2}
\end{table}
\end{center}
\vspace{0.1in}

\begin{center}
\begin{table} [ht]
\fontsize{9}{7} \selectfont
\begin{tabular}{|c|c|c|c|c|c|c|c|c|c|} \hline
 \multicolumn{2}{|c|}{$\al$}&\multicolumn{2}{|c|}{0.8}&\multicolumn{2}{|c|}{0.9}&\multicolumn{2}{|c|}{0.99}&\\\cline{1-8}
&{Dens.}$^\dag$&Min$^\ddag$&Max$^\ddag$&Min$^\ddag$&Max$^\ddag$&Min$^\ddag$&Max$^\ddag$&\raisebox{1.0ex}[0cm][0cm]{$\lm^*$}\\\hline
 & 30 &3.665  & 6.420& 4.053 & 5.437 & 4.412  & 4.551&4.443\\\cline{2-9}
& 40 & 3.550  & 7.020& 3.918  & 5.655& 4.262& 4.436&4.292\\\cline{2-9}
& 50 & 4.436  & 6.754& 4.300  & 5.718& 4.652  & 4.794&4.683\\\cline{2-9}
 \raisebox{1.9ex}[0cm][0cm]{Example 1} & 60 &  3.651  & 6.059& 4.009& 5.215& 4.342  & 4.463& 4.369\\\cline{2-9}
& 70 & 4.119  & 9.428& 4.504  & 7.170& 4.861& 5.129& 4.890\\\cline{2-9}
& 80 & 4.132    & 10.351& 4.646  & 7.784& 5.143  & 5.460&5.188 \\\cline{2-9}
& 90 & 3.238  & 7.954& 3.543& 5.901& 3.828  & 4.064&3.854 \\\hline
 & 60 & 8.296 & 17.677&  9.32 & 14.031& 10.266& 10.739&10.389 \\\cline{2-9}
& 70 & 8.469 & 17.98& 9.544& 14.317& 10.544  & 11.023& 10.674\\\cline{2-9}
 \raisebox{1.9ex}[0cm][0cm]{Example 2} &80&6.742  &15.461 &7.479 & 11.84& 8.149 & 8.585& 8.24\\\cline{2-9}
& 90 & 8.497& 18.748& 9.464& 14.613& 10.351  & 10.868&10.469 \\\hline
 & 40 & 5.349  & 16.011&5.978  & 11.316& 6.591  & 7.125&6.653 \\\cline{2-9}
& 50 & 4.640  & 9.781& 5.166  & 7.745& 5.654  & 5.912& 5.698\\\cline{2-9}
& 60 & 5.008  & 14.888& 5.601& 10.551& 6.157& 6.653& 6.207\\\cline{2-9}
 \raisebox{1.9ex}[0cm][0cm]{Example 3} & 70&  4.541& 9.931& 5.022  & 7.727& 5.471& 5.742
&5.509 \\\cline{2-9}
& 80 & 5.870    & 11.895 & 6.523  & 9.554& 7.126  & 7.430&7.187 \\\cline{2-9}
& 90 & 6.140  & 14.117& 6.821& 10.820& 7.446  & 7.846
& 7.505\\\hline
 & 50 & 5.879&14.868& 6.460    & 11.228& 7.097  & 7.575& 7.160\\\cline{2-9}
& 60 & 6.156  & 12.343& 6.587  & 11.086& 7.244  & 7.695& 7.311\\\cline{2-9}
 \raisebox{1.9ex}[0cm][0cm]{Example 4} & 70&  6.156  & 12.343& 6.904  & 10.008& 7.600  & 7.911& 7.673\\\cline{2-9}
& 80 & 5.873   &13.894& 6.551  & 10.574& 7.178  & 7.581&7.238 \\\cline{2-9}
& 90 & 6.583  & 17.910& 7.299  & 12.973& 7.950  & 8.518& 8.016\\\hline
 \end{tabular}

\vspace{0.1in} {\footnotesize $\dag$: The density of the transition probability matrix (\%).\\
$\ddag$: Two values in these columns correspond to the
upper and lower bounds of the optimal average reward $\lm^*$}
\vspace{-0.1in}
\caption{Comprising of the upper and lower bounds of the optimal average reward $\lm^*$ of family of MDPs from Examples~\ref{ex:dense1} -~\ref{ex:dense4} obtained by solving corresponding discouned MDP problems with various values of the discount factor $\al$.} \label{tab:3}
\end{table}
\end{center}

\end{document}